\documentclass[11pt]{amsart}
\usepackage{mathrsfs}
\usepackage{amssymb}
\usepackage{setspace}
\pagestyle{plain}
\usepackage{titletoc}
\usepackage{amscd}
\usepackage{amsmath, amssymb}
\usepackage{amsfonts}
\usepackage[colorlinks,linkcolor=blue,citecolor=blue, pdfstartview=FitH]{hyperref}

  \setlength{\textwidth}{5.5in} \setlength{\oddsidemargin}{0.3in}
\setlength{\evensidemargin}{0.3in}\setlength{\footskip}{0.3in}
\setlength{\headsep}{0.25in}

\usepackage{amssymb}
\usepackage{amsmath}
\usepackage{latexsym}
\usepackage{amsfonts}
\usepackage{graphicx}

\theoremstyle{plain}
\newtheorem{thm}{Theorem}[section]
\newtheorem{theorem}[thm]{Theorem}
\newtheorem{lemma}[thm]{Lemma}
\newtheorem{corollary}[thm]{Corollary}
\newtheorem{proposition}[thm]{Proposition}

\theoremstyle{definition}

\newtheorem{remark}[thm]{Remark}

\newtheorem{definition}[thm]{Definition}

\newtheorem{example}[thm]{Example}

\newtheorem{defn-thm}[thm]{Definition-Theorem}
\newtheorem{conjecture}[thm]{Conjecture}

\newcommand{\p}{{\partial }}
\newcommand{\bp}{{\overline{\partial }}}
\newcommand{\sq}{{\sqrt{-1}}}

\DeclareMathOperator{\Ric}{Ric}

\allowdisplaybreaks

\newcommand{\be}{\begin{equation}}
\newcommand{\ee}{\end{equation}}
\newcommand{\bea}{\begin{eqnarray}}
\newcommand{\eea}{\end{eqnarray}}

\newcommand{\eps}{\varepsilon}

\newcommand{\vs}{\vspace{0.5cm}}

\setlength{\parskip}{.1 in}

\def\XXint#1#2#3{{\setbox0=\hbox{$#1{#2#3}{\int}$ }
\vcenter{\hbox{$#2#3$ }}\kern-.6\wd0}}

\usepackage{fancyhdr}
\pagestyle{fancy}


\begin{document}

\title{\ \ \ \ \ \ \ \ On real bisectional curvature \newline \ for Hermitian manifolds}
\author{Xiaokui Yang$^\S$}\thanks{$^\S$Research supported by China's
Recruitment
 Program of Global Experts and National Center for Mathematics and Interdisciplinary Sciences,
 Chinese Academy of Sciences.}
\address{Xiaokui Yang, Morningside Center of Mathematics and Hua Loo-Keng Key Laboratory of Mathematics, Academy of Mathematics and Systems Science\\ Chinese Academy of Sciences, Beijing, 100190, China}

 \email{xkyang@amss.ac.cn}

\author{Fangyang Zheng$^\dagger$}
\address{Fangyang Zheng. Department of Mathematics,
The Ohio State University, 231 West 18th Avenue, Columbus, OH 43210,
USA and Zhejiang Normal University, Jinhua 321004, Zhejiang, China.}
\email{{zheng.31@osu.edu}}
\thanks{$^\dagger$Research supported by a Simons Collaboration Grant from the Simons Foundation.}

\begin{abstract}
Motivated by the recent work of Wu and Yau on the ampleness of canonical line bundle for compact K\"ahler
manifolds with negative holomorphic sectional curvature, we introduce a new curvature notion called {\em real bisectional curvature}
for  Hermitian manifolds. When the metric is K\"ahler, this is just the holomorphic sectional curvature $H$,
and when the metric is non-K\"ahler, it is slightly stronger than $H$. We classify compact Hermitian manifolds
 with constant non-zero real bisectional curvature, and also slightly extend Wu-Yau's theorem to the Hermitian case.
  The underlying reason for the extension is that the Schwarz lemma of Wu-Yau works the same when the target metric
   is only Hermitian but has nonpositive real bisectional curvature.
\end{abstract}

\maketitle{\small{ \begin{spacing}{0.8}
 \tableofcontents\end{spacing}}}

\section{Introduction and the statement of results}

The study of holomorphic sectional curvature in K\"ahler geometry has been a classic topic,
and it also attracted some attention in recent years, for example, the work of Heier and collaborators on
 compact K\"ahler manifolds with positive or negative holomorphic sectional curvature (\cite{HW, HLW10, HLW14, AGZ}),
  and the recent breakthrough of Wu and Yau \cite{WY} in which they proved that any projective K\"ahler manifold with negative
   holomorphic sectional curvature must have ample canonical line bundle. This result was obtained by Heier et.\! al.\! earlier
    under the additional assumption of the Abundance Conjecture.

In \cite{TY}, Tosatti and the first named author proved that any compact K\"ahler manifold with nonpositive holomorphic sectional
 curvature must have nef canonical line bundle, with that in hand,  they were able to drop the projectivity assumption in the
  aforementioned Wu-Yau Theorem. More recently, Diverio and Trapani \cite{DT} further generalized the result by assuming that
   the holomorphic sectional curvature is only {\em quasi-negative,} namely, nonpositive everywhere and negative somewhere
    in the manifold. In \cite{WY1}, Wu and Yau give a direct proof of the statement that any compact K\"ahler manifold with
     quasi-negative holomorphic sectional curvature must have ample canonical line bundle.

In this direction, the following conjectures are still open:

\begin{conjecture}\label{conjecture1.1}
Let $M^n$ be a compact complex manifold.
 \begin{itemize}
\item[(a)] If $M$ is Kobayashi hyperbolic, then its canonical line
bundle $K_M$ is ample.
\item[(b)] If $M$ admits a Hermitian metric with quasi-negative holomorphic sectional curvature, then $K_M$ is ample.
\item[(c)] If $M$ admits a Hermitian metric with negative holomorphic sectional curvature, then $K_M$ is ample.
\end{itemize}
\end{conjecture}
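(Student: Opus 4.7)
The plan is to attack the three parts in increasing order of difficulty, namely (c), then (b), then (a), with the Schwarz-type machinery developed in this paper serving as the guide for (c). In the K\"ahler category, (c) is resolved by Wu-Yau \cite{WY}, and (b) by Wu-Yau \cite{WY1} and Diverio-Trapani \cite{DT}; the remaining obstruction in both cases is to remove the K\"ahler assumption. My strategy is therefore: establish (c) by extending the Wu-Yau continuity method to the Hermitian setting; deduce (b) by a Diverio-Trapani-type perturbation of the $C^0$ estimate from (c); and treat (a) by combining jet-differential methods with a minimal-model reduction that exploits the absence of rational curves in compact Kobayashi hyperbolic manifolds.

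For (c) I would fix a Hermitian metric $\om_0$ with $H(\om_0)\le -\kappa<0$ and run a Monge-Amp\`ere continuity path of the form
\[
(\chi_t+\sq\p\bp\vphi_t)^n = e^{t\vphi_t+F_t}\,\om_0^n,
\]
where $\chi_t$ is built from $-\Ric(\om_0)$ together with a Gauduchon correction chosen so that the left hand side stays positive along the path and the right hand side is integrable. The crux is a pointwise trace-type Schwarz estimate: differentiate $\log\tr_{\om_t}\om_0$ and bound the curvature term using the hypothesis $H<0$. In the K\"ahler case this is exactly Wu-Yau's computation; in the Hermitian case extra first-order torsion terms appear and must be absorbed either by choosing $\chi_t$ cleverly or by replacing $\om_0$ in intermediate steps by its associated Gauduchon metric. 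A successful estimate would give a uniform $C^0$-bound on $\vphi_t$ and hence nefness of $K_M$, together with a positive lower bound on $\int_M(-\Ric(\om_0))^n$ yielding bigness in Bott-Chern cohomology; nef and big then force ampleness once Moishezonness is known, the latter itself a consequence of the bigness of $K_M$.

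For (b) I would approximate the quasi-negative metric by $\om_0+\eps\chi$ with $\chi$ a suitable negative correction supported near the locus $\{H(\om_0)<0\}$, following \cite{WY1,DT}, and pass to the limit in the $C^0$ estimate of (c). For (a) the situation is much harder, because Kobayashi hyperbolicity is not itself a curvature condition and no Schwarz lemma applies directly. The plan would be to combine (i) the fact that a compact Kobayashi hyperbolic manifold contains no rational curves and is therefore not uniruled---so $K_M$ is pseudo-effective and, granting the minimal model and abundance program, nef---with (ii) the jet-differential machinery of Demailly and Siu used to produce many global sections of symmetric powers of $\Omega_M^1$, from which bigness of $K_M$ would follow by Bogomolov-type positivity arguments.

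The principal obstacle is that each of these steps leaks in the non-K\"ahler direction. In (c), the Hermitian Schwarz computation---as this paper itself makes transparent---genuinely requires nonpositive \emph{real bisectional} curvature of the target, a condition strictly stronger than $H\le 0$; the torsion terms produced by commuting $\p$ and $\bp$ in the Bochner identity cannot be absorbed using $H$ alone, and a new curvature identity (or a subtle choice of auxiliary metric) appears to be necessary. In (b), passing to the limit in the Tosatti-Weinkove Monge-Amp\`ere equation with only a Gauduchon-class right-hand side carries its own regularity subtleties that do not arise in the K\"ahler setting. In (a), even granting MMP and abundance, bigness of $K_M$ under Kobayashi hyperbolicity alone is the hard core of Lang's conjecture, and is open already in complex dimension three.
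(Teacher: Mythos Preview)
The statement you are addressing is Conjecture~1.1, which the paper explicitly introduces with the sentence ``In this direction, the following conjectures are still open.'' There is no proof in the paper to compare against; the authors do not claim to settle any of (a), (b), or (c). What the paper \emph{does} prove are the strictly weaker Theorems~1.7--1.9, in which holomorphic sectional curvature is replaced by real bisectional curvature and, in Theorems~1.7 and~1.8, a K\"ahlerian hypothesis is retained.

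Your proposal is therefore not a proof but a research outline, and you yourself acknowledge as much in the final paragraph. Your diagnosis of the obstructions is accurate and in fact coincides with the paper's own narrative: the Hermitian Schwarz computation (Lemma~4.1 through Corollary~4.4) controls the target-curvature term via $\sum_{\alpha,\gamma} R^h_{\alpha\bar\alpha\gamma\bar\gamma}\lambda_\alpha^2\lambda_\gamma^2$, which is bounded by real bisectional curvature, not by $H$; Proposition~1.3 and Example~2.2 show that $H<0$ does not imply $B_h\le 0$ in the non-K\"ahler case, so the torsion-absorption step you flag in (c) is a genuine gap, not a technicality. Likewise, your remark that (a) embeds Lang's conjecture is correct and well known. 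In short, nothing in your proposal is wrong as commentary, but none of it constitutes a proof, and the paper makes no pretense of supplying one either.
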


By Yau's Schwarz Lemma \cite{YauS}, any Hermitian manifold with
holomorphic sectional curvature bounded from above by a negative
constant must be Kobayashi hyperbolic. So (a) implies (c). Clearly,
(b) also implies (c).

As an attempt to push the results of \cite{WY} to conjecture (b) or (c) above,  and also
 to study holomorphic sectional curvature on Hermitian manifolds in general, we introduce the following curvature term for Hermitian manifolds.

\begin{definition}
Let $(M^n,g)$ be a Hermitian manifold, and denote by $R$ the curvature tensor of the Chern
 connection. For $p\in M$, let $e= \{ e_1, \ldots , e_n\}$ be a unitary tangent frame at $p$,
  and let $a=\{ a_1, \ldots , a_n\} $ be non-negative constants with $|a|^2 = a_1^2 + \cdots + a_n^2 > 0$.
   Define the {\em real bisectional curvature} of $g$ by
$$ B_g(e,a)=\frac{1}{|a|^2} \sum_{i, j=1}^n R_{i\overline{i}j\overline{j}} a_ia_j.$$
\end{definition}

We will say that a Hermitian manifold $(M^n,g)$ has {\em positive real bisectional curvature,}
 denoted by $B_g>0$, if for any $p\in M$ and any unitary frame $e$ at $p$,
  any nonnegative constants $\{ a_1, \ldots , a_n\}$ (not all of them zero),
   it holds that $B_g(e,a) >0$. The term $B_g\geq 0$, $B_g<0$, or $B_g\leq 0$
    are defined similarly. We will say that $B_g$ is {\em quasi-positive,}
     if it is non-negative everywhere, and is positive at a point $p\in M$ for all choices of $e$ and  $a$.

Let $c$ be a constant. It is easy to see that, at any $p\in M$ and for any fixed unitary frame $e$,
 then $B_g(e',a) > c$  for any choice of unitary frame $e'$ and nonnegative (but not all zero) constants $a$ if and only if
\begin{equation}
\sum_{i,j,k, \ell } R_{i\overline{j}k\overline{\ell}} \xi_{ij} \xi_{k\ell } > c \ \mbox{tr}(\xi^2)
\end{equation}
for all non-trivial, nonnegative, Hermitian $n\times n$ matrix $\xi$. The condition $B_g \geq c$, $<c$, or $\leq c$ can be defined similarly.

Recall that the holomorphic sectional curvature in the direction $v$ is defined by $H(v)= R_{v\overline{v}v\overline{v}}/|v|^4$.
If we take $e$ so that $e_1$ is parallel to $v$, and take $a_1=1$, $a_2=\cdots =a_n=0$, then $B$ becomes $H(v)$.
So  $B>0$ ($\geq 0$, $<0$, or $\leq 0$) implies $H>0$ ($\geq 0$, $<0$, or $\leq 0$).
Also, if $B$ is quasi-positive or quasi-negative, then so is $H$. Using Berger's averaging trick,
 we will show in the next section that

\begin{proposition}\label{proposition1.3}
If $(M^n,g)$ is K\"ahler, or more generally, K\"ahler-like, then
$B_g > 0$  $(\geq 0$, $<0$, or $\leq 0)$ if and only if $H>0$ $(\geq
0$, $<0$, or $\leq 0)$. In the non-K\"ahler case, there are
Hermitian metrics $g$
 such that $H>0$ but $B_g \ngeq 0$, and there are also Hermitian  metrics $g$ such that $H<0$ but $B_g \nleq 0$.
\end{proposition}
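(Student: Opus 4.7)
The plan splits according to the two halves of the statement.

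For the K\"ahler-like half, the implication ``$B_g$-sign $\Rightarrow H$-sign'' was already noted just before the proposition (take $e_1 = v/|v|$ and $a = (1,0,\ldots,0)$, whence $B_g(e,a) = H(v)$), so the nontrivial direction is $H$-sign $\Rightarrow B_g$-sign. I would obtain this via Berger's averaging trick. Fix $p \in M$, a unitary frame $\{e_i\}$ at $p$, and nonnegative reals $a_1,\ldots,a_n$ not all zero, and consider
\begin{equation*}
v_\theta = \sum_{i=1}^n \sqrt{a_i}\, e^{\sq\, \theta_i}\, e_i, \qquad \theta=(\theta_1,\ldots,\theta_n) \in [0,2\pi]^n.
\end{equation*}
Expanding $R_{v_\theta \bar v_\theta v_\theta \bar v_\theta}$ as a fourfold sum and integrating in $\theta$, only multi-indices with $\theta_i-\theta_j+\theta_k-\theta_\ell \equiv 0$ survive; these are exactly the two patterns $\{i=j,\,k=\ell\}$ and $\{i=\ell,\,k=j\}$, overlapping only on the diagonal $i=j=k=\ell$, so
\begin{equation*}
\frac{1}{(2\pi)^n}\int_{[0,2\pi]^n} R_{v_\theta \bar v_\theta v_\theta \bar v_\theta}\, d\theta = \sum_{i,k} R_{i\bar i k\bar k}\, a_i a_k + \sum_{i,k} R_{i\bar k k\bar i}\, a_i a_k - \sum_i R_{i\bar i i\bar i}\, a_i^2.
\end{equation*}

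In the K\"ahler-like setting the Bianchi-type symmetry $R_{i\bar j k\bar\ell} = R_{k\bar j i\bar\ell}$ gives $R_{i\bar k k\bar i} = R_{k\bar k i\bar i}$, so after relabeling $i \leftrightarrow k$ the second sum coincides with the first and the right-hand side reduces to $2|a|^2 B_g(e,a) - \sum_i H(e_i)\, a_i^2$. On the left, $R_{v\bar v v\bar v} = H(v)|v|^4$ and $|v_\theta|^2 = \sum_i a_i$ is independent of $\theta$, so the sign of the integral is inherited from $H$. If $H \geq 0$ then the integral and each term $H(e_i)a_i^2$ are nonnegative, which forces $2|a|^2 B_g(e,a) \geq \sum_i H(e_i)\, a_i^2 \geq 0$, i.e.\ $B_g(e,a) \geq 0$. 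Replacing $\geq$ by $\leq$, $>$, or $<$ treats the remaining three cases, the strict versions using that $\sum_i H(e_i) a_i^2$ is strictly signed whenever $a \neq 0$.

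For the non-K\"ahler half, the plan is to produce explicit examples by deforming a K\"ahler model with strictly signed $H$. Start from $g_0$ on $\PP^n$ (Fubini-Study, giving $H > 0$) or on a ball quotient (giving $H < 0$), and set $g_\varepsilon = g_0 + \varepsilon\, h$ with $h$ Hermitian chosen so that $g_\varepsilon$ fails to be K\"ahler. By continuity of the Chern curvature in the metric, $H_{g_\varepsilon}$ retains the sign of $H_{g_0}$ for $\varepsilon$ small; on the other hand the K\"ahler symmetry $R_{i\bar k k\bar i} = R_{k\bar k i\bar i}$ is broken, so the collapse that was used in the averaging identity no longer holds. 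The main obstacle is to arrange $h$ (or, more concretely, to pick an explicit non-K\"ahler model such as a Hopf or Calabi-Eckmann manifold) so that the mismatch $R_{i\bar k k\bar i} - R_{k\bar k i\bar i}$ actually dominates the $H$-contribution at some frame and some weight vector, producing a unitary $e$ and nonnegative $a$ with $B_g(e,a)$ of the wrong sign relative to $H$.
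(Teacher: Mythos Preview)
Your argument for the K\"ahler-like direction is correct and essentially parallel to the paper's, though you implement Berger's trick differently: you average over the torus $[0,2\pi]^n$ with weights $\sqrt{a_i}\,e^{\sqrt{-1}\theta_i}$, while the paper integrates over $\mathbb{P}^{n-1}$ against the Fubini--Study volume with weights $b_i w_i$ (and sets $a_i = b_i^2$). The paper's version yields directly $\sum_{i,k}(R_{i\bar i k\bar k}+R_{i\bar k k\bar i})a_ia_k>0$ without a diagonal correction term, whereas your torus average picks up the extra $-\sum_i H(e_i)a_i^2$, which you then absorb using the sign of $H$. Both routes work and both rely on the same K\"ahler-like symmetry $R_{i\bar k k\bar i}=R_{k\bar k i\bar i}$ to collapse the mixed sum into $B_g$.

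The second half, however, is not a proof, and your perturbative plan is actually doomed. If $g_0$ is K\"ahler with $H_{g_0}>0$, then by the first half $B_{g_0}>0$ as well; since $B_g$ depends continuously on $g$ (it is built from the same curvature coefficients), for any small perturbation $g_\varepsilon=g_0+\varepsilon h$ you will still have $B_{g_\varepsilon}>0$ at every point (on a compact manifold, or near a fixed point). In other words, the very continuity you invoke to preserve the sign of $H$ also preserves the sign of $B_g$, so the mismatch $R_{i\bar k k\bar i}-R_{k\bar k i\bar i}=O(\varepsilon)$ can never dominate the $O(1)$ contribution from $H$. One really needs a metric that is far from K\"ahler. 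The paper does this by writing down an explicit $U(n)$-invariant metric on a ball in $\mathbb{C}^n$,
\[
g_{i\bar j}=(1+|z|^2)\,\delta_{ij}+(\varepsilon-2)\,\bar z_i z_j,
\]
computes $R_{i\bar j k\bar\ell}=-\delta_{ij}\delta_{k\ell}+(2-\varepsilon)\delta_{i\ell}\delta_{kj}$ at the origin, reads off $H\equiv 1-\varepsilon>0$ there, and then checks $B_g(e,a)=-n+2-\varepsilon<0$ for $a_1=\cdots=a_n=1/\sqrt{n}$. The $H<0$ example is obtained by taking the transpose-inverse metric. Your suggestion of Hopf or Calabi--Eckmann manifolds is not pursued and would require its own curvature computation.
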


Note that the term {\em K\"ahler-like} means that the Chern curvature tensor obeys all the symmetries of
 the curvature of K\"ahler metrics (see \cite{YZ}). The second part of the above proposition says that,
  in the non-K\"ahler case, real bisectional curvature is indeed a stronger curvature condition than
   holomorphic sectional curvature, and the concept is a natural generalization of holomorphic sectional
    curvature for K\"ahler manifolds. On the other hand, the difference between $B_g$ and $H$ is subtle
     and not very big, as we will see in the next section. For instance, the sign of $B_g$ does not
      control the sign of any of the three Ricci curvature tensors (see \S 2).

It is a natural question to ask when will a Hermitian manifold have constant real bisectional curvature.
 To this end, we have the following:

\begin{theorem}\label{Theorem1.4}
Let $(M^n,g)$ be a compact Hermitian manifold whose real bisectional
curvature is constantly equal to $c$. Then $c\leq 0$. Moreover, when
$c=0$, then $(M,g)$ is a balanced manifold with vanishing first,
second, and third Ricci tensors, and its Chern curvature satisfies
the property $R_{x\overline{y}u\overline{v}} = -
R_{u\overline{v}x\overline{y}} $ for any type $(1,0)$ complex
tangent vectors $x$, $y$, $u$, $v$.
\end{theorem}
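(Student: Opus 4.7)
The plan is to first recast the constancy of $B_g$ as a pointwise algebraic identity on the Chern curvature tensor, and then to extract each of the stated conclusions by algebra together with a single integration over the compact manifold.

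By the characterization in equation (1.1), ``$B_g\equiv c$'' is equivalent to
\[
\sum_{i,j,k,\ell} R_{i\bar j k\bar\ell}\,\xi_{ij}\,\xi_{k\ell} \;=\; c\,\tr(\xi^{2})
\]
for every positive-semidefinite Hermitian matrix $\xi$. Both sides are quadratic polynomials in the entries of $\xi$, and the PSD cone has nonempty interior inside the real vector space of Hermitian matrices, so this identity persists for all Hermitian $\xi$; and since Hermitian matrices span the complex space of all $n\times n$ matrices over $\CC$, it further extends by $\CC$-bilinearity to arbitrary complex matrices. Polarizing against $(\xi,\eta)=(E^{ab},E^{cd})$ (standard basis matrices) then yields, at every point of $M$, the pointwise identity
\[
(\star)\qquad R_{a\bar b c\bar d}+R_{c\bar d a\bar b}\;=\;2c\,\delta_{ad}\delta_{bc}, \qquad \forall\,a,b,c,d.
\]
From $(\star)$ one immediately reads off: specializing to $c=0$ gives the announced symmetry $R_{x\bar y u\bar v}=-R_{u\bar v x\bar y}$; tracing with $b=a$ and summing gives $\Ric^{(1)}_{c\bar d}+\Ric^{(2)}_{c\bar d}=2c\,\delta_{cd}$, and one further trace produces the first Chern scalar curvature $s_{1}=nc$; tracing with $d=a$, $b=c$ and summing, together with the Hermitian symmetry of $R$, produces the second Chern scalar curvature $s_{2}=n^{2}c$. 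In particular, both Chern scalar curvatures are pointwise constant.

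The bound $c\le 0$ now comes from Gauduchon's classical formula on a compact Hermitian $n$-fold, which expresses $\int_{M}(s_{1}-s_{2})\,\omega^{n}$ as a \emph{nonnegative} constant multiple of $\int_{M}|\theta|^{2}\,\omega^{n}$, where $\theta$ is the Lee form (vanishing if and only if $g$ is balanced). Substituting the pointwise identity $s_{1}-s_{2}=-n(n-1)c$ gives
\[
-n(n-1)c\cdot\vol(M)\;=\;(\mathrm{const}>0)\cdot\int_{M}|\theta|^{2}\,\omega^{n}\;\ge\;0,
\]
so $c\le 0$ for $n\ge 2$ (the case $n=1$ is outside the nontrivial content since then $B_g=H$). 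In the equality case $c=0$, the right side vanishes, so $\theta\equiv 0$ and $(M,g)$ is balanced.

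It remains to upgrade the algebraic content of $(\star)$ at $c=0$ to the vanishing of the three Chern Ricci tensors. From the pair-antisymmetry $R_{a\bar b c\bar d}=-R_{c\bar d a\bar b}$ and its two independent single-trace contractions one obtains $\Ric^{(1)}=-\Ric^{(2)}$ and $\Ric^{(3)}=-\Ric^{(4)}$ pointwise; since all four tensors are Hermitian, these relations by themselves are consistent with nonzero Ricci. To close the argument I would combine $(\star)$ with the first Bianchi identity for the Chern connection (whose deviation from the Kähler case is controlled by the covariant derivative of the $(2,0)$-torsion), use the balanced condition $\theta\equiv 0$ from the previous step to kill the torsion-trace terms, and integrate the resulting identity against a well-chosen Ricci-type tensor on $M$ to convert the remainder into a squared-norm that must vanish. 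This last step is where I anticipate the main technical obstacle: the balanced condition only annihilates the trace of the Chern torsion, and cancelling the remaining torsion pieces requires the full antisymmetric structure of $(\star)$, not merely its contractions.
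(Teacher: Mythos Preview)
Your derivation of $(\star)$ and the argument for $c\le 0$ are correct and essentially coincide with the paper's proof. The identity you cite as ``Gauduchon's classical formula'' is exactly what the paper proves hands-on: writing the Gauduchon $1$-form as $\eta=\sum_j\eta_j\varphi_j$ with $\eta_j=\sum_iT^i_{ij}$, the trace of the Chern--Bianchi identity $2T^k_{ij,\bar\ell}=R_{j\bar\ell i\bar k}-R_{i\bar\ell j\bar k}$ gives
\[
2\,\eta_{j,\bar\ell}\;=\;\sum_k\bigl(R_{j\bar\ell k\bar k}-R_{k\bar\ell j\bar k}\bigr),
\]
so that $2\sum_i\eta_{i,\bar i}=s_1-s_2$ in your notation, while integrating $\partial\bar\partial(\omega^{n-1})=2\bar\partial\eta\wedge\omega^{n-1}+4\eta\wedge\bar\eta\wedge\omega^{n-1}$ yields $\int_M(\sum_i\eta_{i,\bar i})\,\omega^n=2\int_M|\eta|^2\omega^n$. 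Your inequality and the balanced conclusion for $c=0$ follow exactly as you say.

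The genuine gap is in the Ricci-vanishing step, but the obstacle you anticipate is illusory: no second integration and no squared-norm trick is needed. Once $\eta\equiv 0$, the displayed identity above becomes the \emph{pointwise} relation
\[
\mathrm{Ric}^{(1)}_{i\bar j}\;=\;\sum_k R_{i\bar j k\bar k}\;=\;\sum_k R_{k\bar j i\bar k}\,.
\]
Now chain this with $(\star)$ at $c=0$ and the Hermitian symmetry $\overline{R_{a\bar b c\bar d}}=R_{b\bar a d\bar c}$:
\[
\mathrm{Ric}^{(1)}_{i\bar j}=\sum_k R_{k\bar j i\bar k}
=-\sum_k R_{i\bar k k\bar j}
=-\sum_k \overline{R_{k\bar i j\bar k}}
=-\overline{\sum_k R_{j\bar i k\bar k}}
=-\overline{\mathrm{Ric}^{(1)}_{j\bar i}}
=-\,\mathrm{Ric}^{(1)}_{i\bar j},
\]
where the fourth equality uses the same pointwise relation with $(i,j)$ swapped. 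Hence $\mathrm{Ric}^{(1)}=0$, and then $\mathrm{Ric}^{(2)}=-\mathrm{Ric}^{(1)}=0$ from your trace of $(\star)$, while $\mathrm{Ric}^{(3)}_{i\bar j}=\sum_kR_{i\bar k k\bar j}=-\sum_kR_{k\bar j i\bar k}=-\mathrm{Ric}^{(1)}_{i\bar j}=0$. The balanced condition is used precisely once, to promote the traced Bianchi identity to the pointwise equality $\mathrm{Ric}^{(1)}=\mathrm{Ric}^{(4)}$; after that the argument is pure linear algebra.
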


\noindent We would like to propose the following conjecture:

\begin{conjecture}
Let $M^n$ ($n\geq 3$) be a compact Hermitian manifold with vanishing
real bisectional curvature $c$. Then $c=0$, and  its Chern curvature
tensor $R=0$.
\end{conjecture}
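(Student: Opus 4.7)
The conjecture naturally splits into two claims: $c=0$ and $R\equiv 0$. Since Theorem \ref{Theorem1.4} already yields $c\leq 0$ together with, in the case $c=0$, the balanced property, vanishing of all three Chern Ricci tensors, and the antisymmetry $R_{x\overline{y}u\overline{v}} = -R_{u\overline{v}x\overline{y}}$, the plan is first to rule out $c<0$ and then to extract $R\equiv 0$ from the rich structure already available in the $c=0$ case.

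For the first step, setting $a_i = 1$ for every $i$ in the definition of $B_g$ gives
\begin{equation*}
\sum_{i,j=1}^n R_{i\overline{i}j\overline{j}} \equiv nc
\end{equation*}
on all of $M$, i.e.\ a constant (strictly negative if $c<0$) ``third Chern scalar.'' On a compact Hermitian manifold I would pair this with an integration-by-parts identity built from $\omega$ and the Chern--Ricci forms in the spirit of Gauduchon, aiming for a contradiction with the strict negativity of $nc$. An appealing alternative is to apply the Schwarz-lemma-for-real-bisectional-curvature extension that the authors establish elsewhere in the paper to the identity map $(M,g)\to (M,g)$; the constancy of $B_g$ on both sides would then impose a pointwise rigidity sustainable on a compact manifold only when $c=0$.

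Granted $c=0$, the defining identity
\begin{equation*}
\sum_{i,j=1}^n R_{i\overline{i}j\overline{j}}\, a_i a_j \equiv 0
\end{equation*}
holds in every unitary frame and for every nonnegative $a$. To extract the off-diagonal components of $R$, I would polarize by applying arbitrary $U(n)$-rotations of the frame and reading off the resulting linear equations on $R$, indexed by $(U,a)\in U(n)\times \mathbb{R}_{\geq 0}^n$. Combined with the antisymmetry $R_{i\overline{j}k\overline{l}} = -R_{k\overline{l}i\overline{j}}$, the standard reality $R_{i\overline{j}k\overline{l}} = \overline{R_{j\overline{i}l\overline{k}}}$, and the vanishing of all three Chern Ricci tensors, these equations should span a large enough subspace of the space of Chern-type curvature tensors to force $R\equiv 0$. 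The hypothesis $n\geq 3$ enters exactly at this point: one needs sufficiently many independent frame rotations to close the system, which is known to fail for $n=2$.

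The chief obstacle I anticipate is the first step. In the non-K\"ahler setting the third Chern scalar does not enjoy a direct Chern--Weil interpretation, so the integral-identity route requires a careful choice of auxiliary forms; and while the Schwarz-lemma route is cleaner in spirit, it needs genuine strict inequality in the target metric, which the equation $B_g \equiv c$ only just barely provides. The second step, by contrast, reduces to a representation-theoretic bookkeeping problem --- tedious but tractable --- once it is verified that $U(n)$-polarization really does span the space of allowable curvature tensors in dimension $\geq 3$.
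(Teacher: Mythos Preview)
The statement you are attempting to prove is an open \emph{conjecture} in the paper; the authors do not supply a proof, and in Remark~3.1(b) they reiterate it as a conjecture. So there is no paper proof to compare your proposal against. That said, your proposal has genuine gaps and does not go beyond what the paper already establishes in Theorem~\ref{Theorem1.4}.

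For the second step (the $c=0$ case), you propose to ``polarize by applying arbitrary $U(n)$-rotations of the frame'' to the identity $\sum_{i,j} R_{i\overline{i}j\overline{j}}\,a_ia_j=0$. But this polarization is exactly how one derives equation~(\ref{4}) in the paper: varying over all unitary frames and all nonnegative $a$ yields precisely $R_{i\overline{j}k\overline{\ell}}+R_{k\overline{\ell}i\overline{j}}=0$, i.e.\ the antisymmetry $R_{x\overline{y}u\overline{v}}=-R_{u\overline{v}x\overline{y}}$. That identity, together with the vanishing of the three Ricci tensors, is the full content of Theorem~\ref{Theorem1.4} in the $c=0$ case. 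Your proposed linear system therefore does not contain any equation the authors have not already written down, and your assertion that ``these equations should span a large enough subspace \dots\ to force $R\equiv 0$'' is exactly the open problem. You would need an additional, genuinely new constraint on $R$ (for instance, a second Bianchi-type identity interacting with the antisymmetry, or a global argument using the balanced condition) to make progress; pure pointwise representation theory on the data already available is not known to suffice.

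For the first step (ruling out $c<0$), you acknowledge this is the chief obstacle and offer only heuristics. The Schwarz-lemma route applied to the identity map with source and target both equal to $(M,g)$ gives a tautology (the inequality in Corollary~\ref{schwarz} becomes $u\leq u$), so it cannot yield a contradiction. The integration-by-parts route is precisely the method of the proof of Theorem~\ref{Theorem1.4}, and that argument extracts only $c\leq 0$; to get strict inequality you would need to show $\eta\not\equiv 0$ is impossible when $c<0$, which is again not addressed.
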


By Boothby's theorem, compact Hermitian manifolds with vanishing Chern curvature are precisely the
 compact quotients of complex Lie groups equipped with left invariant metrics.

Besides the constant real bisectional curvature cases, more generally, it would certainly be
 very interesting to try to understand the class of all compact Hermitian manifolds with positive
  (or negative) real bisectional curvature. For instance, we could raise the following

\begin{conjecture}\label{conjecture1.6}
Let $(M^n,g)$ be a compact Hermitian manifold, $B_g$ its real
bisectional curvature, and $K_M$ its canonical line bundle.
\begin{itemize} \item[(a)] If $B_g>0$, then $M$ is simply-connected.
\item[(b)] If $B_g>0$, then $M$ is rationally connected.
\item[(c)] If $B_g\leq 0$, then $K_M$ is nef.
\item[(d)] If $B_g$ is quasi-negative, then $K_M$ is ample.
\end{itemize}
\end{conjecture}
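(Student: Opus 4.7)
The plan is to split the conjecture by sign. For the nonpositive statements (c), (d) I would extend the Wu--Yau and Tosatti--Yang program to the Hermitian setting via the refined Schwarz lemma already previewed in the abstract; for the positive statements (a), (b) I would first dispose of the K\"ahler-like case using Proposition~\ref{proposition1.3} together with Xiaokui Yang's rational connectedness theorem, and then attempt to push to the genuinely non-K\"ahler regime.

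For (c), the strategy is to run a Monge--Amp\`ere continuity path in the Bott--Chern class $c_1(K_M)+\eps [\omega_g]$ with a Gauduchon reference metric, and to establish the crucial $C^2$-estimate by computing $\Delta_g \log \tr_g \omega_\eps$ in the spirit of Yau's Schwarz lemma. In the K\"ahler case only the holomorphic sectional curvature of the target survives the computation; in the Hermitian case one obtains instead a quadratic form in a nonnegative Hermitian matrix built from the eigenvalues of $\omega_\eps$ relative to $g$, and the precise sign needed to close the maximum principle argument is exactly $B_g\leq 0$ written in the form (1.1). Once this refined Schwarz lemma is in place, the remainder of the Tosatti--Yang argument should go through essentially unchanged and yield $K_M$ nef. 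For (d), one would then combine with the Diverio--Trapani integration trick to upgrade the quasi-negative sign of $B_g$ into strict positivity of $\int_M c_1(K_M)^n$, and conclude via a Nakai--Moishezon-type criterion applied to a suitable projective reduction.

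For (b) (which implies (a), since rationally connected manifolds are simply connected), I would first appeal to Proposition~\ref{proposition1.3}: under a K\"ahler-like hypothesis in the sense of \cite{YZ}, $B_g>0$ is equivalent to $H>0$, and Yang's theorem on positive holomorphic sectional curvature delivers rational connectedness. In the genuinely non-K\"ahler case one must produce rational curves directly; feeding a diagonal nonnegative matrix into (1.1) already yields a scalar-type positivity $\sum_{i,j} R_{i\overline{i}j\overline{j}} > 0$, and a Hermitian bend-and-break, most likely carried out on a nearby projective degeneration that inherits the positivity of $g$, should then produce the required rational curves through a general point.

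The main obstacle I foresee is (b) in the non-K\"ahler regime: the excerpt itself notes that $B_g$ does not control any Ricci tensor, so the cleanest Myers or Mori curvature inputs are unavailable, and an additional algebro-geometric ingredient (or an entirely different Hermitian harmonic-map approach) appears to be necessary. The difficulties in (c) and (d) are by comparison technical rather than conceptual, stemming mostly from controlling torsion terms in Bott--Chern cohomology; I expect the refined Schwarz lemma previewed in the abstract to resolve the decisive step there.
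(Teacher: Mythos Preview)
The statement is Conjecture~\ref{conjecture1.6}, which the paper leaves \emph{open}; there is no proof in the paper to compare against. What the paper actually establishes are the weaker Theorems~\ref{Theorem1.7} and~\ref{Theorem1.8}, namely parts (c) and (d) under the \emph{additional} hypothesis that $M$ is K\"ahlerian, and for that restricted setting your outline matches the paper's argument: one applies the Hermitian Schwarz inequality (Corollary~\ref{schwarz}) to the identity map from a K\"ahler Monge--Amp\`ere solution $(M,\omega_\eps)$ into $(M,h)$, and then the Tosatti--Yang and Diverio--Trapani machinery goes through verbatim.

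Where your proposal overshoots is in claiming to drop the K\"ahlerian hypothesis by running the continuity path with a Gauduchon reference metric in Bott--Chern cohomology. This is precisely the step the authors say they cannot carry out (``It would certainly be highly desirable to drop this assumption, but at this point we have no idea how to achieve that goal''). The concrete obstructions are that the Schwarz computation needs the \emph{source} metric $\omega_\eps$ to be K\"ahler so that $\mathrm{Ric}^{(2)}(\omega_\eps)=\mathrm{Ric}^{(1)}(\omega_\eps)$ and so that $\Box_{\omega_\eps}$ carries no torsion correction (compare \eqref{28}); and the endgame for (d) relies on Demailly--Paun and Kawamata, which require $M$ to be K\"ahler, respectively projective---your ``suitable projective reduction'' is not available on a general compact Hermitian manifold. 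For (a) and (b) the paper proves nothing at all; your K\"ahler-like reduction via Proposition~\ref{proposition1.3} is fine but assumes away the difficulty, and in the genuinely non-K\"ahler case the proposed ``Hermitian bend-and-break on a nearby projective degeneration that inherits the positivity of $g$'' is not an existing technique. The paper itself shows that $B_g>0$ controls none of the three Ricci tensors, so the standard curvature inputs to Mori-type or Myers-type arguments are unavailable. Those parts of your write-up are research directions, not a proof, and should be labeled as such.
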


Of course the part (d) here is just a slightly weaker version of
Conjecture \ref{conjecture1.1}, part (b). In this paper, we will
focus on the negative/nonpositive real bisectional curvature cases,
and the main observation of this article is that  the Schwarz lemma
of Wu and Yau (\cite[Proposition~9]{WY}) can be generalized to
Hermitian manifolds when the target metric has negative real
bisectional curvature. As a consequence, the results of Wu-Yau
\cite{WY, WY1}, Tosatti-Yang \cite{TY}, and Diverio-Trapani
\cite{DT} can be partially generalized to the Hermitian case. To be
more precise, we have the following:

\begin{theorem}\label{Theorem1.7}
Let $(M,h)$ be a compact Hermitian manifold with nonpositive real
bisectional curvature. If $M$ is K\"ahlerian, then its canonical
line bundle is nef.
\end{theorem}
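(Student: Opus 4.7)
The plan is to adapt the Tosatti--Yang proof of nefness of $K_M$ under nonpositive holomorphic sectional curvature to the present Hermitian target $(M,h)$, using the generalized Schwarz lemma (the extension of Wu--Yau's Proposition~9 to Hermitian targets with $B_h\leq 0$) announced in the introduction. Since $M$ is K\"ahlerian, fix any K\"ahler metric $\om_0$ on $M$ (not necessarily related to $h$). To prove that $K_M$ is nef, it suffices to show that $c_1(K_M)+\eps[\om_0]$ is a K\"ahler class for every $\eps>0$; letting $\eps\to 0$ then exhibits $c_1(K_M)$ as a limit of K\"ahler classes, which is Demailly's definition of nefness.

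To produce a K\"ahler representative $\ti{\om}_\eps \in c_1(K_M)+\eps[\om_0]$, I would run the continuity method for the twisted K\"ahler--Einstein equation
\[
\Ric(\ti{\om}_\eps)=-\ti{\om}_\eps+\eps\om_0,
\]
equivalently a complex Monge--Amp\`ere equation for a K\"ahler potential $\vphi_\eps$. Openness and short-time existence along the path are the classical Aubin--Yau arguments, and closedness reduces to a uniform $C^0$ estimate on $\vphi_\eps$ as $\eps\to 0$. The only new geometric input needed from the target side is a pointwise bound
\[
\tr_{\ti{\om}_\eps} h \leq C
\]
with $C$ independent of $\eps$; once that is available, the remaining a priori estimates and the limit $\eps\to 0$ can be transcribed verbatim from Tosatti--Yang.

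The estimate $\tr_{\ti{\om}_\eps} h\leq C$ is the content of the generalized Schwarz lemma applied to the identity map $(M,\ti{\om}_\eps)\to (M,h)$, with K\"ahler domain and Hermitian target satisfying $B_h\leq 0$. The main obstacle of the proof is thus exactly this Schwarz-lemma step, and it is what motivates the whole notion of real bisectional curvature. The standard Wu--Yau computation of $\Delta_{\ti{\om}_\eps}\log \tr_{\ti{\om}_\eps} h$, after diagonalizing $h$ against $\ti{\om}_\eps$ at a maximum point and invoking Royden's symmetrization trick, produces a curvature-of-$h$ contribution of the form $\sum_{i,j}R^h_{i\overline{i}j\overline{j}}\,a_ia_j$ with $a_i,a_j\geq 0$. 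This is precisely the quadratic form controlled by $B_h\leq 0$, and nothing weaker (for instance, $H\leq 0$ alone) would suffice in the Hermitian case. The delicate new ingredient, compared with the K\"ahler target version of Wu--Yau, is the appearance of torsion commutator terms from $h$ when one differentiates; the K\"ahlerity of the domain $\ti{\om}_\eps$ keeps the source torsion absent, so the task is to check that the target torsion of $h$ can be absorbed into favorable terms without disturbing the sign of the algebraic curvature expression above. Once this extended Schwarz lemma is in place, Theorem~\ref{Theorem1.7} follows.
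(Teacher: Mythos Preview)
Your overall strategy matches the paper's: run the Tosatti--Yang nefness argument using a K\"ahler metric $\omega_0$ on the domain side, and replace Wu--Yau's Schwarz lemma by its Hermitian-target version (Corollary~\ref{schwarz} in this paper) applied to the identity map $(M,\omega_\eps)\to(M,h)$. The paper frames it as a proof by contradiction (assume $K_M$ not nef, find the boundary value $\eps_0>0$, solve the twisted K\"ahler--Einstein equation for $\eps_0+\eps$, and extract a limit), while you set it up directly; these are equivalent.

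Two points in your write-up are off, and they obscure what actually makes the argument go through. First, there is \emph{no} Royden symmetrization here. The Chern--Lu formula (Lemma~\ref{Lemma4.1}) already produces the target term in the form
\[
R^h_{\alpha\bar\beta\gamma\bar\delta}\bigl(g^{i\bar j}f^\alpha_i\overline{f^\beta_j}\bigr)\bigl(g^{p\bar q}f^\gamma_p\overline{f^\delta_q}\bigr)
=\sum_{\alpha,\gamma}R^h_{\alpha\bar\alpha\gamma\bar\gamma}\,\lambda_\alpha^2\lambda_\gamma^2
\]
after diagonalizing, which is exactly the real bisectional quadratic form; one then invokes $B_h\le 0$ directly. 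Royden's trick is what one would use \emph{in the K\"ahler target case} to pass from this expression to holomorphic sectional curvature; the whole point of introducing $B_h$ is to bypass that step. Second, there are \emph{no} target-torsion commutator terms to absorb: the Bochner identity $\partial\bar\partial|s|^2=\langle\nabla's,\nabla's\rangle-\langle\Theta^E s,s\rangle$ for a holomorphic section of $E=T^*_M\otimes f^*T_N$ uses only the Chern curvature of $E$, and this holds for arbitrary Hermitian $(N,h)$. So the ``delicate new ingredient'' you anticipate simply does not arise.

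A minor correction: with $B_h\le 0$ (so $\kappa=0$) the Schwarz inequality gives $\mbox{tr}_{\tilde\omega_\eps}\omega_h\le nD/\eps$, which is \emph{not} independent of $\eps$. That is harmless for nefness, since you only need closedness of the continuity path for each fixed $\eps>0$; the paper's contradiction set-up is designed precisely to make the bound uniform (the denominator is $\eps_0+\eps$ with $\eps_0>0$), which is what is needed to take a limit and contradict the boundary assumption.
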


\begin{theorem}\label{Theorem1.8}
Let $(M,h)$ be a compact Hermitian manifold with quasi-negative real
bisectional curvature. If $M$ is K\"ahlerian, then its canonical
line bundle is ample.
\end{theorem}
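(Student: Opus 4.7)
The plan is to follow the strategy of Wu-Yau \cite{WY1} and Diverio-Trapani \cite{DT} for the K\"ahler case of quasi-negative holomorphic sectional curvature, substituting the original Schwarz lemma of \cite[Prop.~9]{WY} with its generalization to a Hermitian target of nonpositive real bisectional curvature (the technical contribution of this paper announced in the introduction). By Theorem \ref{Theorem1.7} applied to $h$ itself, the canonical bundle $K_M$ is already nef. Fixing a K\"ahler reference $\omega_0$ on $M$, the class $c_1(K_M)+t[\omega_0]$ is therefore K\"ahler for every $t>0$, and Yau's theorem produces K\"ahler metrics $\omega_t\in c_1(K_M)+t[\omega_0]$ satisfying the twisted K\"ahler-Einstein equation $\Ric(\omega_t)=-\omega_t+t\omega_0$.

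Apply the generalized Schwarz lemma to the holomorphic identity map $(M,\omega_t)\to(M,h)$. Since $\Ric(\omega_t)\geq-\omega_t$ and $B_h\leq 0$, a maximum principle applied to $u_t:=\tr_{\omega_t}(h)$ --- strengthened via an auxiliary test function concentrated near $p$, as in \cite{WY1}, to exploit the strict inequality $B_h(p)<0$ --- yields a uniform bound $u_t\leq C$ independent of $t$. The arithmetic-geometric mean inequality then gives $\omega_t^n\geq(n/C)^n\, h^n$ pointwise, hence
$$\int_M\omega_t^n \;\geq\; \left(\frac{n}{C}\right)^n \vol_h(M)\;>\;0.$$
Since the left-hand side equals $(c_1(K_M)+t[\omega_0])^n\to c_1(K_M)^n$ as $t\to 0^+$, we conclude $c_1(K_M)^n>0$, so $K_M$ is big.

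To upgrade ``nef and big'' to ``ample'', invoke Nakamaye's theorem: it suffices to check $c_1(K_M)^{\dim V}\cdot V>0$ for every positive-dimensional subvariety $V\subset M$. Running the previous argument on a resolution of singularities of $V$ using the restriction $h|_V$ (whose real bisectional curvature remains nonpositive, and is strictly negative at $p$ when $p\in V$), together with the inductive device of Diverio-Trapani \cite{DT} for subvarieties avoiding $p$, produces the required positivity and completes the proof.

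The principal obstacle lies inside the Schwarz-lemma step: because $B_h$ is not bounded above by a negative constant, the bare maximum principle on $\log u_t$ returns no information, and one must localize strict negativity from a neighborhood of $p$ via a carefully chosen auxiliary function. The K\"ahler version of this localization in \cite{WY1} relies on the full symmetry of the curvature tensor, and adapting it to a Hermitian $h$ requires exploiting that $B_h\leq 0$ is strictly stronger than $H_h\leq 0$ (Proposition \ref{proposition1.3}) in order to absorb the torsion-type cross terms appearing in the Bochner formula. A secondary difficulty is verifying that the restriction of the real bisectional curvature to a complex submanifold remains nonpositive, which is what makes the Nakamaye induction run verbatim; if this restriction property fails, the inductive step must instead be carried out by a direct intersection-theoretic computation on $M$.
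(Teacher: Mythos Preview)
Your broad outline --- nef via Theorem \ref{Theorem1.7}, twisted K\"ahler--Einstein metrics $\omega_t$, Schwarz lemma for the identity $(M,\omega_t)\to(M,h)$, volume lower bound, hence $K_M$ big --- matches the paper's argument. The paper obtains exactly the differential inequality $\Delta_{\omega_\eps}\log S_\eps \geq \frac{\kappa}{n}S_\eps - 1$ (with $S_\eps = \tr_{\omega_\eps}\omega_h$ and $B_h\leq -\kappa$, $\kappa$ quasi-positive) from Corollary \ref{schwarz}, and then simply invokes \cite{DT} verbatim for $c_1(K_M)^n>0$.

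Where you diverge is the passage from ``nef and big'' to ``ample''. The paper does \emph{not} run an induction on subvarieties. It argues: $c_1(K_M)^n>0$ plus nef gives big by Demailly--Paun \cite{DP}; big plus K\"ahler gives Moishezon, hence projective; then Kawamata's base-point-free/cone theorem says $K_M$ is ample because $M$ contains no rational curves (any map $\PP^1\to M$ is constant by the ordinary Schwarz lemma, since $B_h\leq 0\Rightarrow H_h\leq 0$). Your proposed route via Nakamaye and restriction to subvarieties has a real gap: there is no ``inductive device of Diverio--Trapani for subvarieties avoiding $p$'' --- \cite{DT} contains no such argument --- and you give no mechanism for handling a subvariety $V$ with $p\notin V$, where $h|_V$ need not be quasi-negative. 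The algebraic route bypasses this entirely.

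Your discussion of obstacles is also off target. Once Corollary \ref{schwarz} is in hand, there is nothing further to adapt: the source $\omega_\eps$ is K\"ahler, so its Laplacian and Ricci are standard; the target curvature enters only through the term $\sum R^h_{\alpha\bar\alpha\gamma\bar\gamma}\lambda_\alpha^2\lambda_\gamma^2$, which is controlled \emph{by definition} of real bisectional curvature. No ``torsion-type cross terms'' appear, and no use is made of Proposition \ref{proposition1.3}. The paper's point is precisely that the definition of $B_h$ is tailored so that Wu--Yau's Schwarz calculation goes through unchanged for a Hermitian target; the localization argument of \cite{DT}/\cite{WY1} then applies without modification.
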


Recall that K\"ahlerian means that the manifold admits a K\"ahler metric, which of course does not have to be $h$ here.
 It would certainly be highly desirable to drop this assumption, but at this point we have no idea how to achieve that goal.
  We do observe the following special case, which provides some partial evidence to part (d) of
  Conjecture \ref{conjecture1.6}.

\begin{theorem}\label{Theorem1.9}
Let $M$ be a compact Hermitian manifold with quasi-negative real
bisectional curvature.
 Let $N^n$ be a compact complex manifold which admits a holomorphic fibration $f: N \rightarrow Z$,
  where a generic fiber is a compact K\"ahler manifold with $c_1=0$. Then  $M^n$ cannot be bimeromorphic to $N^n$.
\end{theorem}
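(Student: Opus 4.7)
The plan is to argue by contradiction. Assume $M$ is bimeromorphic to $N$ and choose a common resolution: a smooth compact complex manifold $W$ together with bimeromorphic morphisms $p\colon W\to N$ and $q\colon W\to M$. For generic $z\in Z$, the submanifold $G_z:=(f\circ p)^{-1}(z)\subset W$ maps birationally onto $F_z$ under $p$, so $G_z$ is bimeromorphic to the compact K\"ahler manifold $F_z$ with $c_1(F_z)=0$; by Varouchas' theorem $G_z$ is itself K\"ahler. The goal is to show that $q|_{G_z}$ is constant for $z$ in a Zariski-dense subset of $Z$. Once this is established, a bimeromorphic factorization of $q$ through $f\circ p\colon W\to Z$ forces $\dim q(W)\le\dim Z<n$, contradicting the fact that $q$ is bimeromorphic.

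To prove constancy I would transfer the problem to $F_z$: after resolving indeterminacy of $\varphi_z:=q\circ p^{-1}|_{F_z}\colon F_z\dashrightarrow M$ one obtains a holomorphic map $\hat\varphi_z\colon\hat F_z\to M$ from a K\"ahler modification $\pi\colon\hat F_z\to F_z$. By Yau's solution of the Calabi conjecture, $F_z$ carries a Ricci-flat K\"ahler metric $\omega_0$, whose pullback $\omega:=\pi^{\ast}\omega_0$ is a smooth semipositive $(1,1)$-form on $\hat F_z$ that is K\"ahler off the exceptional locus. Applying the Wu-Yau Schwarz lemma for Hermitian targets established in this paper to $\hat\varphi_z$ (after a K\"ahler perturbation $\omega_{\varepsilon}:=\omega+\varepsilon\tilde\omega$ and passage to the limit $\varepsilon\to 0$), the quantity $u:=\tr_{\omega}\hat\varphi_z^{\ast}h$ satisfies on the K\"ahler locus a Chern-Lu-type inequality schematically of the form
\begin{equation*}
\Delta_{\omega}\log u\;\ge\;-\,B_h(\hat\varphi_z)\cdot u,
\end{equation*}
whose right-hand side is nonnegative by the hypothesis $B_h\le 0$. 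The maximum principle on the compact manifold $\hat F_z$ then yields the dichotomy: either $u\equiv 0$ (so $\hat\varphi_z$ is constant), or $u$ is a positive constant and $B_h(\hat\varphi_z)\equiv 0$ along the image, i.e.\ $\hat\varphi_z(\hat F_z)\subset\{B_h=0\}$.

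The decisive step, in which the \emph{quasi}-negativity of $B_h$ is essential, is to rule out the second alternative for a Zariski-dense set of $z$. Since $B_h$ is continuous and strictly negative at some point of $M$, the set $U:=\{B_h<0\}\subset M$ is a nonempty open subset. If for a Zariski-dense set of $z\in Z$ the image $\hat\varphi_z(\hat F_z)$ meets $U$, the dichotomy forces constancy of $\hat\varphi_z$ for those $z$, which is exactly what we need. Otherwise, there is a Zariski-dense open $V\subset Z$ with $\hat\varphi_z(\hat F_z)\subset M\setminus U$ for every $z\in V$; since $(f\circ p)^{-1}(V)$ is open dense in $W$ and $q$ is bimeromorphic, $q\bigl((f\circ p)^{-1}(V)\bigr)=\bigcup_{z\in V}\hat\varphi_z(\hat F_z)$ would be a dense subset of $M$ contained in the closed set $M\setminus U$, forcing $U=\varnothing$ and yielding a contradiction. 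The two main technical hurdles I anticipate are (i) justifying the perturbation argument and the limit $\varepsilon\to 0$ near the exceptional divisor of $\pi$, and (ii) absorbing the torsion terms produced by the non-K\"ahler metric $h$ in the Chern-Lu computation — the latter being precisely what motivates the use of real bisectional curvature rather than holomorphic sectional curvature on the target.
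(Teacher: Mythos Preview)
Your overall strategy is sound and reaches the same endgame as the paper---a holomorphic map from a Ricci-flat compact K\"ahler manifold into $(M,h)$, to which the Hermitian Schwarz inequality is applied once the image meets the negative locus $U=\{B_h<0\}$. However, the paper bypasses your resolutions entirely by invoking the Shiffman--Griffiths extension theorem: since quasi-negative $B_h$ forces $H_h\le 0$, the target $(M,h)$ satisfies the Hartogs phenomenon, so the bimeromorphic map $N\dashrightarrow M$ is automatically \emph{holomorphic}. One then restricts this holomorphic map directly to a generic fiber $Y=F_z$ (already compact K\"ahler with $c_1=0$, hence Ricci-flat by Yau), chosen so that the restriction is non-constant and the image meets $U$, and applies Theorem~\ref{schwarz2} with $\lambda=\mu=0$ (or Theorem~\ref{Theorem4.5}) to get an immediate contradiction.

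Even within your common-resolution framework, your technical hurdle (i) is illusory: since $p\colon W\to N$ is a modification, for generic $z\in Z$ the fiber $F_z$ misses the center and $p|_{G_z}\colon G_z\to F_z$ is a biholomorphism, so $\varphi_z=q\circ p^{-1}|_{F_z}$ is already holomorphic and you may use Yau's honest Ricci-flat K\"ahler metric on $F_z\cong G_z$---no degenerate pullback, no perturbation $\omega_\varepsilon$, no limit. Your density argument for meeting $U$ and your dimension count are correct, but the paper's single-fiber contradiction (pick \emph{one} $Y$ with non-constant restriction whose image meets $U$; the Schwarz inequality forces $u$ constant, hence $\kappa u^2\equiv 0$, contradicting both choices) is shorter than your constancy-on-a-dense-set-plus-factorization route.
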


Also, as immediate consequences of the Hermitian version of Wu-Yau's Schwarz Lemma (see \S 4), we get the following rigidity results:

\begin{theorem}\label{Theorem1.10}
Let $(M,g)$ be a Hermitian manifold with nonnegative second Ricci
curvature and with bounded Gauduchon $1$-form $\eta$. Assume that as
a Riemannian manifold, it is complete and has Ricci curvature
bounded from below. Let $(N,h)$ be a
 Hermitian manifold with real bisectional curvature bounded from above by a negative constant. Then any holomorphic map from $M$ to $N$ must be constant.
\end{theorem}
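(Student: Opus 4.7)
The plan is to derive Theorem \ref{Theorem1.10} as a direct application of the Hermitian Schwarz lemma of Section 4, exploiting the nonnegativity of the second Chern Ricci curvature on the source to force constancy (rather than merely a distance-decreasing estimate).

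Let $f\colon M\to N$ be holomorphic and set $u=|\partial f|_{g,h}^2$, viewing $\partial f$ as a section of $T^{\ast 1,0}M\otimes f^{\ast}T^{1,0}N$. First I would establish a Bochner--Kodaira inequality for the Chern Laplacian of $\log u$ on the open set $\{u>0\}$. Compared to the Kähler case, two modifications arise on the Hermitian source: the second Chern Ricci curvature of $g$ appears, paired in a manifestly nonnegative way against $(\partial f)^{\ast}(\partial f)$; and the remaining non-Kähler torsion terms organise into a single first-order error with coefficient involving only the sup-norm of the Gauduchon $1$-form $\eta$. Schematically,
\[
\Delta_g\log u \;\geq\; \frac{1}{u}\Bigl(\mathrm{Ric}^{(2)}_g(\partial f,\overline{\partial f}) \;-\; Q^h(\partial f)\Bigr) \;-\; C\,|\nabla\log u|_g,
\]
where $Q^h(\partial f)$ denotes the target curvature contribution and $C$ depends only on $\|\eta\|_{\infty}$.

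The decisive algebraic observation is that, after a choice of unitary frames at $p$ and $f(p)$ and letting $A=((\partial f)_{i\alpha})$ be the matrix of $\partial f$, the quantity $Q^h(\partial f)$ rewrites as
\[
Q^h(\partial f) \;=\; \sum_{\alpha,\beta,\gamma,\delta} R^h_{\alpha\bar\beta\gamma\bar\delta}\,\xi_{\alpha\beta}\,\xi_{\gamma\delta},
\]
where $\xi:=A^{\ast}A$ is Hermitian and nonnegative on $T^{1,0}_{f(p)}N$ (simply because it is a Gram matrix) and $\mathrm{tr}\,\xi = u$. This is precisely the pairing appearing in the matrix formulation of real bisectional curvature just before Proposition \ref{proposition1.3}, so the hypothesis $B_h\leq -K<0$ combined with the trace inequality $\mathrm{tr}(\xi^2)\geq u^2/n$ gives $Q^h(\partial f)\leq -\frac{K}{n}u^2$. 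Using also $\mathrm{Ric}^{(2)}_g\geq 0$, one obtains on $\{u>0\}$
\[
\Delta_g\log u \;\geq\; \frac{K}{n}\,u \;-\; C\,|\nabla\log u|_g.
\]
Since $(M,g)$ is Riemannian-complete with Ricci bounded below, the Omori--Yau maximum principle produces a sequence $p_k$ with $\log u(p_k)\to\sup\log u$, $|\nabla\log u|_g(p_k)\to 0$, and $\Delta_g\log u(p_k)\leq 1/k$; passing to the limit forces $\sup_M u\leq 0$, hence $u\equiv 0$ and $f$ is constant.

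The main obstacle is the Bochner step: verifying cleanly that the Chern curvature of $N$ enters exactly as a real bisectional pairing against a Hermitian nonnegative matrix, and that all non-Kähler correction terms on the source collapse into a single first-order term in $\nabla\log u$ with coefficient controlled by $\|\eta\|_{\infty}$. This is the content of the Hermitian Schwarz lemma of Section 4; granting it, the rest is the mechanical combination of the real bisectional hypothesis, the elementary trace inequality, and the Omori--Yau principle.
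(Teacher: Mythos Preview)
Your overall strategy matches the paper's: establish a Schwarz-type inequality via the Chern Bochner formula, identify the target curvature term as a real bisectional pairing against the nonnegative Hermitian matrix $\xi=A^{\ast}A$, and conclude by a maximum principle. The identification of $Q^h(\partial f)$ with the real bisectional form is exactly right.

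There is, however, a genuine gap in the final step. You apply the Omori--Yau maximum principle to $\log u$, but this requires $\log u$ to be bounded from above, i.e.\ $u$ bounded, which is not known a priori on the noncompact manifold $M$; in addition $\log u$ is only defined on the open set $\{u>0\}$, which need not be complete. The paper avoids both problems by working with the inequality for $u$ itself (Theorem~\ref{schwarz2} with $\lambda=\mu=0$ gives $\Box_g u \geq a u^{2}$ with $a=\kappa/r>0$), then converting to the Riemannian Laplacian via $\Delta v = 2\Box_g v + 2\sum_i(v_i\overline{\eta_i}+v_{\overline{i}}\eta_i)$, so that boundedness of $\eta$ yields $\Delta u \geq 2au^{2} - 4C|\nabla u|$, and finally applying Yau's maximum principle not to $u$ or $\log u$ but to the auxiliary function $v=(u+1)^{-1/2}$, which is globally defined and automatically bounded in $(0,1]$. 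This substitution is the standard device precisely to sidestep the boundedness issue; your argument can be repaired the same way, but as written the appeal to Omori--Yau for $\log u$ is not justified.

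A small clarification on where $\eta$ enters: in the paper's derivation the Chern Bochner formula for $\Box_g u$ (Lemma~\ref{Lemma4.1}) is exact and contains no torsion correction; the Gauduchon form appears only through the difference $\Delta - 2\Box_g$, not in the Bochner identity itself.
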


\begin{theorem}\label{Theorem1.11}
Let $(M,g)$ be a compact Hermitian manifold with nonnegative second
Ricci curvature, and $(N,h)$ be a
 Hermitian manifold with nonpositive real bisectional curvature. Then any non-constant holomorphic map from $M$ to $N$ is totally geodesic.
\end{theorem}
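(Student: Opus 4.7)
The plan is to run a Bochner--Weitzenb\"ock argument on the energy density $u = |\partial f|_{g,h}^2$ of the holomorphic map $f:M\to N$, in order to reduce the theorem to the strong maximum principle for the Chern Laplacian $\Delta_g$ on the compact manifold $M$. The key input is the Hermitian Schwarz-lemma identity from \S 4, whose target curvature term survives as a favorable-sign contribution precisely when $B_h\leq 0$.

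First, I would fix $p\in M$, and choose unitary frames $\{e_i\}$ of $T^{1,0}_p M$ and $\{\varepsilon_\alpha\}$ of $T^{1,0}_{f(p)}N$ that simultaneously diagonalize $\p f(p)$, so that $\p f(e_i)=\mu_i\varepsilon_i$ with $\mu_i\geq 0$. Expanding the Chern Laplacian of $u$ and using both $\bp f=0$ and the symmetries of the Chern curvature tensor of $N$, the standard Bochner calculation (the one underlying Proposition~9 of \cite{WY} and its Hermitian extension in \S 4) gives, at $p$,
\begin{equation*}
\Delta_g u \ \geq\ |\nabla'\p f|^2 \ +\ \sum_i \Ric^{(2)}_{i\bar i}\,\mu_i^2 \ -\ \sum_{i,k} R^N_{i\bar i k\bar k}\,\mu_i^2\mu_k^2,
\end{equation*}
where $\nabla'$ is the $(1,0)$-part of the Chern covariant derivative on $T^*M^{1,0}\otimes f^*T^{1,0}N$, $\Ric^{(2)}$ is the second Chern-Ricci tensor of $g$, and $R^N$ is the Chern curvature of $h$ in the frame $\{\varepsilon_\alpha\}$. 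The last sum is exactly $|a|^2 B_h(\varepsilon,a)$ with $a=(\mu_1^2,\ldots,\mu_n^2)$, so $B_h\leq 0$ makes it $\leq 0$; combined with $\Ric^{(2)}\geq 0$ on $M$, one obtains
\begin{equation*}
\Delta_g u \ \geq\ |\nabla'\p f|^2 \ \geq\ 0
\end{equation*}
pointwise on $M$.

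Now $\Delta_g$ is a second-order elliptic operator with no zero-order term, and $M$ is compact and connected, so the strong maximum principle forces any Chern-subharmonic function that attains its maximum to be constant. Hence $u$ is constant; since $f$ is non-constant, that constant is positive, so the conclusion is non-vacuous. Because $u$ is constant, $\Delta_g u \equiv 0$, and the pointwise inequality then forces $|\nabla'\p f|^2\equiv 0$ on $M$. The vanishing $\nabla'\p f\equiv 0$ is precisely the statement that $f$ is totally geodesic with respect to the two Chern connections, completing the proof.

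The step that requires the most care is the first one, namely the Bochner identity in the non-K\"ahler setting: one has to verify that after all the torsion-type correction terms are organized, the target curvature contribution really reduces to $\sum R^N_{i\bar i k\bar k}\mu_i^2\mu_k^2$, which is the exact quantity whose sign is controlled by the newly introduced $B_h$ rather than merely by the weaker $H$. This is precisely the technical content of the Hermitian Schwarz lemma of \S 4, which the present theorem packages into a rigidity statement via the strong maximum principle; the fact that the source is \emph{compact} is what lets us dispense with the Gauduchon $1$-form boundedness hypothesis that was needed in Theorem \ref{Theorem1.10}.
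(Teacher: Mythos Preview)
Your proof is correct, and the Bochner step is identical to the paper's use of Lemma~\ref{Lemma4.1}: under $\Ric^{(2)}(g)\geq 0$ and $B_h\leq 0$ one gets $\Box_g u \geq |\nabla' \partial f|^2 \geq 0$. The divergence between your argument and the paper's comes in the global step. You invoke the strong (Hopf) maximum principle for the Chern Laplacian, which is legitimate since $\Box_g = g^{i\bar j}\partial_i\partial_{\bar j}$ is elliptic with no zero-order term; on a compact connected manifold this forces the subsolution $u$ to be constant, and then $|\nabla'\partial f|^2\equiv 0$ follows. The paper instead appeals to Gauduchon's theorem to find $v$ with $\partial\bar\partial(e^v\omega_g^{n-1})=0$, and then integrates:
\[
\int_M |\nabla df|^2 e^v \omega_g^n \ \leq\ \int_M \Box_g u\, e^v \omega_g^n \ =\ \int_M n\sqrt{-1}\,\partial\bar\partial u \wedge e^v\omega_g^{n-1} \ =\ 0,
\]
so $\nabla df\equiv 0$ directly, without first showing $u$ is constant. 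Your route is arguably more elementary, since it sidesteps the existence of a Gauduchon metric in the conformal class; the paper's route is an $L^1$-type argument that yields the vanishing of $\nabla df$ in one stroke and makes transparent why compactness replaces the Gauduchon $1$-form bound needed in Theorem~\ref{Theorem1.10}. Both are standard ways to extract rigidity from a pointwise inequality $\Box_g u \geq Q\geq 0$ on a compact Hermitian manifold.
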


\noindent\textbf{Acknowledgement.} We would like to thank Bing-Long
Chen, Gordon Heier, Kefeng Liu, Kai Tang, Valentino Tosatti, Damin
Wu, Hongwei Xu, Bo Yang,
 Shing-Tung Yau and Xiangyu Zhou for their interests and/or discussions.

\section{The real bisectional curvature of  Hermitian manifolds}

Let $(M^n,g)$ be a Hermitian manifold. Under a local holomorphic coordinate system $(z_1, \ldots , z_n)$,
 the curvature tensor of the Chern connection has components
\begin{equation}
 R_{i\overline{j}k\overline{\ell}} = - \frac{\partial^2 g_{k\overline{\ell}} } {\partial z_i \partial \overline{z}_j}
 + g^{p\overline{q}} \frac{\partial g_{k\overline{q}} } {\partial z_i }   \frac{\partial g_{p\overline{\ell}} } {\partial
 \overline{z}_j}.
 \end{equation}
Let $e$ be a unitary frame of type $(1,0)$ tangent vectors and $a_1, \ldots , a_n$ be
 non-negative constants with $|a|^2=a_1^2 + \cdots + a_n^2 >0$. Recall that the real bisectional curvature $B_g$
  in the direction of $e$ and $a$ is defined by
$$ B_g(e,a) = \frac{1}{|a|^2} \sum_{i,j=1}^n R_{e_i\overline{e}_ie_j\overline{e}_j}  a_i a_j .$$

\begin{remark}
Note that the definition of real bisectional curvature is somewhat analogous to the notion of
 {\em quadratic orthogonal bisectional curvature} defined in \cite{WYZ}, see also \cite{Chau-Tam, Chau-Tam1, FuWangWu, LiWuZheng, Zhang}.
  However, the two curvature notions are actually quite different, in the sense that the former is a slight
   generalization of holomorphic sectional curvature $H$ (and is actually equivalent to $H$ when the metric is K\"ahler)
    while the latter is closer to orthogonal bisectional curvature.
\end{remark}

For any type $(1,0)$ tangent vector $v\neq 0$, if we choose $e$ so that $e_1$ is parallel to $v$, and choose $a_1=1$, $a_2=\cdots =a_n=0$, then we get
$$ B_g(e,a)= R_{v\overline{v}v\overline{v}}/|v|^4 = H(v). $$
So the holomorphic sectional curvature is part of the real bisectional curvature, and the sign of $B$ guarantees the sign of $H$.

Conversely, let $\omega_{FS}$ be the Fubini-Study metric on ${\mathbb P}^{n-1}$ with unit volume, and let $[w_1:\cdots :w_n]$
 be the standard unitary homogeneous coordinate. Then it is well-known that
$$ \int_{{\mathbb P}^{n-1} } \frac{w_i\overline{w}_j w_k\overline{w}_{\ell}}{|w|^4} \omega_{FS}^{n-1} = \frac{\delta_{ij}\delta_{k\ell} + \delta_{i\ell} \delta_{kj} }{n(n+1)} ,$$
so if we fix a point $p\in M$ and fix any nonnegative constants $b_1, \ldots , b_n$,
 not all zero, then by considering the integration
$$ \sum_{i,j,k,\ell=1}^n \int_{{\mathbb P}^{n-1} } R_{i\overline{j}k \overline{\ell}} \frac{b_iw_i \overline{b}_j\overline{w}_j
  b_kw_k \overline{b}_{\ell}\overline{w}_{\ell} }{|w|^4} \omega_{FS}^{n-1} = \frac{2}{n(n+1)}\sum_{i,k=1}^n (R_{i\overline{i}k \overline{k}}
  + R_{i\overline{k}k \overline{i}}) b_i^2 b_k^2,$$
we know that, if $H>0$, then the real bisectional curvature $B_g>0$ if the metric $g$ is K\"ahler-like (\cite{YZ}),
 as in this case the two curvature terms in the right hand side of the above equality are equal. For a general Hermitian metric $g$,
  if $H>0$, then we know that, for any unitary frame $e$ and any nonnegative constants $a_1, \ldots , a_n$ with at least one of them being positive, it holds that
\begin{equation}
\sum_{i,k} (R_{i\overline{i}k \overline{k}} + R_{i\overline{k}k
\overline{i}}) \ a_ia_k > 0.
\end{equation}
This is an equivalent way to describe $H> 0$. It is analogous to the definition of $B_g>0$,
 but not exactly the same (when the metric is not K\"ahler-like in the sense of \cite{YZ}). So we have proved the first part of Proposition \ref{proposition1.3}.

To see the second part of Proposition \ref{proposition1.3}, let us
consider the following example:

\begin{example}\label{Example2.2}
On a ball centered at the origin and with small radius $D \subseteq
{\mathbb C}^n$ ($n\geq 2$), consider the $U(n)$-invariant
 Hermitian metric $g$ defined by
$$ g_{i\overline{j}} = (1+|z|^2) \ \delta_{ij} + (\eps -2) \ \overline{z}_iz_j $$
where $\eps \in (0,1)$ is a constant and $|z|^2=|z_1|^2+\cdots +|z_n|^2$, with $(z_1, \ldots , z_n)$ the standard Euclidean
 coordinate in ${\mathbb C}^n$. We claim that the metric has positive holomorphic sectional curvature, but the real bisectional curvature is not even nonnegative.
\end{example}

At the origin $z=0$, since all the first order derivatives of $g$ are zero, we get
$$ R_{i\overline{j}k \overline{\ell}} = -\frac{\partial^2    g_{k \overline{\ell}}  }{ \partial z_i \partial \overline{z}_j }
 = - \delta_{ij}\delta_{k\ell} + (2-\eps ) \delta_{i\ell}\delta_{kj}.$$
From this, we see that the holomorphic sectional curvature at the origin is constantly $H(v) = 1-\eps$, which is positive.
 On the other hand, if $e_{\alpha } = \sum_i A_{\alpha i}\frac{\partial }{\partial z_i}$ is a unitary frame at $z=0$,
 and $a_1, \ldots , a_n$ are nonnegative constants, with not all of them zero, then we have
$$ B_g(e,a) = \sum_{\alpha , \beta =1}^n  R_{e_{\alpha } \overline{e}_{\alpha } e_{\beta } \overline{e}_{\beta } } a_{\alpha } a_{\beta }
= - \left(\sum_{\alpha =1}^na_{\alpha } \right)^2 + (2-\eps )
\sum_{\alpha =1}^n a_{\alpha }^2. $$ If we take $a_1=\cdots
=a_n=\frac{1}{\sqrt{n}}$, then we get $B_g(e,a)=-n+2-\eps <0$, since
$n\geq 2$. So at the origin,
 hence in a neighborhood of the origin, the metric $g$ has positive holomorphic sectional curvature, but its real bisectional curvature is not even nonnegative.

Note that locally, if we take a metric $h$ that is given by the matrix $\ ^t\! (g_{i\overline{j}})^{-1}$,
 then the curvature of $h$ is that of $g$ with opposite sign. In particular, if we take the inverse transpose of the metric in Example \ref{Example2.2}, namely, if we let
$$ h_{i\overline{j}} = \frac{1}{1+|z|^2}\delta_{ij} + \frac{2-\eps} {(1+|z|^2)(1-(1-\epsilon )|z|^2) } z_i \overline{z}_j ,$$
then it would have $H<0$ near the origin, but $B\nleq 0$. This
completes the proof of Proposition \ref{proposition1.3}. \qed

Next, we would like to point out that for a Hermitian manifold, although the sign of the holomorphic sectional curvature $H$ does not
 control that of the real bisectional curvature $B$, the two are not too far apart from each other. For instance, the sign of $B$ does
  not control the sign of any of the three Ricci curvature tensors of the Chern connection. We have the following

\begin{example}
Consider a small ball  $D$ in ${\mathbb C}^2$ centered at the origin, equipped with the  Hermitian metric $g$ defined by
\begin{eqnarray*}
g_{1\overline{1}} & =  & 1 - |z_1|^2 + (1+b )|z_2|^2 \\
g_{2\overline{2}} & =  & 1 - (1+4b ) |z_1|^2 -|z_2|^2 \\
g_{1\overline{2}} & =  &  (1+b )z_2\overline{z}_1
\end{eqnarray*}
where $(z_1, z_2)$ is the Euclidean coordinate of ${\mathbb C}^2$ and $b >0$ is a constant.
 We claim that the metric has positive real bisectional curvature, but its first, second, and third Ricci curvature are not nonnegative.
\end{example}

At the origin $z=0$, $g_{i\overline{j}} = \delta_{ij}$, and $dg=0$, so the Chern curvature tensor
$$ R_{i\overline{j}k\overline{\ell }}  = - \frac{\partial ^2 g_{k\overline{\ell }} } {\partial z_i \partial \overline{z}_j }  $$
at $z=0$ has components
$$ R_{1\overline{1}1\overline{1}} = R_{2\overline{2}2\overline{2}} = 1, \ \  \ R_{1\overline{1}2\overline{2}}
 = R_{1\overline{2}2\overline{1}} = R_{2\overline{1}1\overline{2}} = -1-b, \ \ \ R_{2\overline{2}1\overline{1}} =1+4b, $$
and all other components are zero at $z=0$. Recall that the first, second, and third Ricci tensor of $R$ are defined by
$$ \mbox{Ric}^{(1)}_{i\overline{j}} = \sum_{k,\ell =1}^n g^{k\overline{\ell }} R_{i\overline{j}k\overline{\ell }} ,
 \ \ \ \mbox{Ric}^{(2)}_{i\overline{j}} = \sum_{k,\ell =1}^n g^{k\overline{\ell }} R_{k\overline{\ell }i\overline{j}},
  \ \ \ \mbox{Ric}^{(3)}_{i\overline{j}} = \sum_{k,\ell =1}^n g^{k\overline{\ell }} R_{i\overline{\ell }k\overline{j }}. $$
At the origin, we have
\begin{eqnarray*}
\mbox{Ric}^{(1)}_{1\overline{1}} & = & R_{1\overline{1}1\overline{1}} + R_{1\overline{1}2\overline{2}} \ = \ -b \\
\mbox{Ric}^{(2)}_{2\overline{2}} & = & R_{1\overline{1}2\overline{2}} + R_{2\overline{2}2\overline{2}} \ = \ -b \\
\mbox{Ric}^{(3)}_{1\overline{1}} & = &
R_{1\overline{1}1\overline{1}} + R_{1\overline{2}2\overline{1}} \ =
\ -b.
\end{eqnarray*}
So none of the three Ricci curvatures is positive at the origin. On the other hand, we claim that the real bisectional curvature of
 $g$ is positive at (hence near) the origin. That is, we want to show that
$$ B= \sum_{i,j,k,\ell } R_{i\overline{j}k\overline{\ell }} \ \xi_{ij} \xi_{k\ell } > 0 $$
for any non-trivial, non-negative Hermitian matrix $\xi = (\xi_{ij})$. Write $\xi_{11}=x$, $\xi_{22}=y$,
 and $\xi_{12}=t$, then we have $x\geq 0$, $y\geq 0$, $|t|^2\leq xy$, and either $x$ or $y$ is positive. At the origin, we have
\begin{eqnarray*}
B & = & x^2 + y^2 + 3b xy - 2(1+b) |t|^2 \\
& \geq & x^2 + y^2 + 3bxy -2(1+b) xy \\
& = & (x-y)^2 + bxy.
\end{eqnarray*}
Clearly, $B>0$ since either $x$ or $y$ will be positive. So the real bisectional curvature of $g$ is
 positive in a small neighborhood of the origin, yet each of the three Ricci tensors is not even non-negative.
  This shows that the sign of the real bisectional curvature does not control the sign of any of the three Ricci tensors.

\vs

\section{Manifolds with constant real bisectional curvature}

In this section, we will prove Theorem \ref{Theorem1.4} stated in
the introduction. Let $(M^n,g)$ be a compact Hermitian manifold with
constant
 real bisectional curvature $c$, and let $e$ be any unitary frame, then by the definition of real bisectional curvature $B_g$, we have
\begin{equation*}
\sum_{i,j,k,\ell } R_{i\overline{j}k\overline{\ell}} \xi_{ij} \xi_{k\ell } = c \sum_{i} \xi_{ii}^2
\end{equation*}
for any  non-trivial, nonnegative Hermitian $n\times n$ matrix $\xi$. This implies that
\begin{equation}
R_{i\overline{j}k\overline{\ell}} + R_{k\overline{\ell}i\overline{j}} = \left\{ \begin{array}{ll} 2c, \ \ \mbox{if} \ i=\ell\ \text{and}\ k=j; \\
\ 0, \  \ \mbox{otherwise}. \end{array} \right.\label{4}
\end{equation}

Next, let us follow the notations of \cite{YZ} and denote by $\{ \varphi_1, \ldots , \varphi_n\}$ the coframe of $(1,0)$-forms dual to $e$,
 with $\omega$ the K\"ahler form of $g$, and $\tau $ the column vector of torsion $(2,0)$-forms of the Chern connection. Denote by $\eta $ the Gauduchon $1$-form, then we have
$$ \tau_k = \sum_{i,j} T_{ij}^k\varphi_i \wedge \varphi_j, \ \ \ \ \eta = \sum_j \eta_j \varphi_j = \sum_{i,j} T^i_{ij}\varphi_j. $$
From Lemma 7 of \cite{YZ}, we have
\begin{equation}
2 T^k_{ij,\overline{\ell }} = R_{j\overline{\ell}i\overline{k}} - R_{i\overline{\ell}j\overline{k}}
\end{equation}
for any indices $1\leq i, j, k, \ell \leq n$, where the index after the comma stands for covariant differentiation with respect to the Chern
 connection. By letting $k=i$ and sum over, we get
 \begin{equation}
2  \ \eta_{j,\overline{\ell }} = \sum_k
(R_{j\overline{\ell}k\overline{k}} -
R_{k\overline{\ell}j\overline{k}}).\label{6}
\end{equation}
By formula (15) of \cite{YZ}, we have $\partial (\omega^{n-1})= -2 \eta \wedge \omega^{n-1}$, hence
\begin{equation}
\partial \overline{\partial } (\omega^{n-1}) = 2\overline{\partial }\eta \wedge \omega^{n-1} + 4 \eta \wedge \overline{\eta } \wedge
\omega^{n-1}.
\end{equation}
Integrating it over the compact manifold $M^n$, we get
\begin{equation}
\int_M \left(\sum_i \eta_{i,\overline{i}} \right) \ \omega^n = 2
\int_M |\eta|^2 \omega^n.\label{8}
\end{equation}
From (\ref{6}), we get
\begin{eqnarray*}
2 \sum_i \eta_{i,\overline{i}}  & = & \sum_{i,k}
(R_{i\overline{i}k\overline{k}} -  R_{k\overline{i}i\overline{k}})
\ = \  \sum_{i\neq k} (R_{i\overline{i}k\overline{k}} -  R_{k\overline{i}i\overline{k}})\\
& = &  \sum_{i\neq k} (- R_{k\overline{k}i\overline{i}} +
R_{i\overline{k}k\overline{i}}-2c) \ = \ - 2 \sum_i
\eta_{i,\overline{i}}-2cn(n-1)
\end{eqnarray*}
where we used (\ref{4}) in the first equality of the second line.
So
$$\sum_i \eta_{i,\overline{i}}=-\frac{1}{2}cn(n-1)$$ everywhere on
$M$. By (\ref{8}), we know that $c\leq 0$. Moreover, when $c=0$, we
have $\eta=0$, and the Hermitian manifold $(M^n,g)$ is balanced, i.e
$d\omega^{n-1}=0$.

Next let us focus on the $c=0$ case. Recall that the first, second,
and third Ricci curvature tensor of the Chern curvature tensor $R$
are defined by
\begin{equation} \mbox{Ric}^{(1)}_{i\overline{j}} = \sum_k R_{i\overline{j}k\overline{k}} \ , \ \ \ \ \
 \mbox{Ric}^{(2)}_{i\overline{j}} = \sum_k R_{k\overline{k}i\overline{j}} \ , \ \ \  \ \ \mbox{Ric}^{(3)}_{i\overline{j}} = \sum_k
 R_{i\overline{k}k\overline{j}}.
\end{equation}
By the fact that $\eta =0$ and (\ref{6}), we know that
\begin{equation} \sum_k R_{i\overline{j} k\overline{k}}  = \sum_k
R_{k\overline{j}i\overline{k}}\label{10}
\end{equation}
for any $1\leq i,j\leq n$ and
$$ \sum_k R_{i\overline{j} k\overline{k}}  = \sum_k R_{k\overline{j}i\overline{k}}
 = - \sum_k R_{i\overline{k}k\overline{j}} = - \sum_k \overline{R_{k\overline{i}j\overline{k}}}
  = - \sum_k \overline{R_{j\overline{i}k\overline{k}}} $$
where the first and last equalities are by formula (\ref{10}), and
the second equality is by (\ref{4}). This means that
$\mbox{Ric}^{(1)}_{i\overline{j}} =0$ for all $1\leq i,j\leq n$.
 Also, by (\ref{4}), we know that the Chern curvature tensor satisfies the skew-symmetry
\begin{equation} R_{x\overline{y}u\overline{v}} = - R_{u\overline{v}
x\overline{y}}\label{12}
\end{equation}
for any type $(1,0)$ tangent vectors $x$, $y$, $u$, and $v$. By
(\ref{10}), we know that $\mbox{Ric}^{(3)}=0$. By (\ref{12}), we get
that
$$ \mbox{Ric}^{(2)} = - \mbox{Ric}^{(1)} =0 .$$
This completes the proof of Theorem \ref{Theorem1.4} stated in \S 1.
\qed
\begin{remark}\begin{itemize}

\item[(a)] For the computations in this section, see also
\cite[Corollary~4.2, Corollary~4.5]{LiuYang} and
\cite[Theorem~3.1]{Yang14}.
\item[(b)]
We conjecture that a compact Hermitian manifold with vanishing real bisectional curvature must have vanishing Chern curvature,
 thus are compact quotients of complex Lie groups equipped with left invariant metrics.
\end{itemize}
\end{remark}

\section{The Hermitian form of Wu-Yau's Schwarz lemma}

The following formula is known as the Schwarz calculation
(e.g. \cite{Lu}, \cite{YauS}), and we include a slightly simpler proof here for
the readers' convenience.

\begin{lemma}\label{Lemma4.1}
Let $f:(M,g)\rightarrow (N,h)$ be a holomorphic map between
Hermitian manifolds. Then in the local holomorphic coordinates
$\{z_i\}$ and $\{w_\alpha\}$ on $M$ and $N$, respectively, we have
the identity
\begin{equation}
\Box_g u = |\nabla df|^2 + \left(g^{i\bar j} R^g_{i\bar j k\bar \ell} \right)
g^{k\bar q} g^{p\bar \ell} h_{\alpha\bar\beta} f^{\alpha }_p \overline{f^{\beta}_q}  - R^h_{\alpha\bar\beta\gamma\bar\delta}\left(g^{i\bar j}
 f^{\alpha}_i \overline{f^{\beta}_j} \right)  \left(g^{p\bar q}  f^{\gamma}_p \overline{
f^{\delta}_q} \right).
\label{chen-lu}
\end{equation}
where $u=\mbox{tr}_{\omega_g} (f^*\omega_h ) $, $f^{\alpha }_i = \frac{\partial f_{\alpha } } {\partial z_i}$, where the map $f$ is
 represented by $w_{\alpha }=f_{\alpha }(z)$ locally, $\nabla$ is the induced
connection on the bundle $E=T^*_M\otimes f^*(T_N))$, and $\Box_g u $ $= \mbox{tr}_{\omega_g} (\sqrt{-1}\partial \overline{\partial }u ) $
 is the complex Laplacian of $u$.
\end{lemma}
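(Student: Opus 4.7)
The plan is to perform the standard Schwarz (Chern--Lu) computation, adapted to the Hermitian setting by working with the Chern connection rather than holomorphic normal coordinates. First I fix an arbitrary point $p \in M$. Since both $g$ and $h$ are only Hermitian, I cannot in general find holomorphic coordinates in which $dg=0$ at $p$; instead I choose local unitary frames $\{e_i\}$ near $p$ on $M$ and $\{\epsilon_\alpha\}$ near $f(p)$ on $N$ such that the Chern connection $1$-forms vanish at $p$ and at $f(p)$ respectively. Such ``Chern-normal'' unitary frames always exist on any Hermitian manifold. In these frames $g_{i\bar j}(p)=\delta_{ij}$, $h_{\alpha\bar\beta}(f(p))=\delta_{\alpha\beta}$, and the first partial derivatives of $g$ and $h$ vanish at the respective base points. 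Since the identity to be proved is tensorial, it suffices to verify it at $p$ in these frames.

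I then regard $df$ as a section of $E = T^{*(1,0)}_M \otimes f^*T^{1,0}_N$, equipped with the Hermitian metric induced from $g^*$ and $h$ and the Chern connection $\nabla$, so that $u=|df|^2_{g,h}$. To compute $\Box_g u = g^{k\bar\ell}\partial_k\partial_{\bar\ell} u$, I expand $\partial_{\bar\ell} u$ as a sum of three terms, according to whether $\partial_{\bar\ell}$ hits $g^{i\bar j}$, the composition $h_{\alpha\bar\beta}\circ f$ (which by the chain rule contributes $(\partial_{\bar w_\delta}h_{\alpha\bar\beta})\,\overline{f^\delta_\ell}$), or $\overline{f^\beta_j}$; the would-be term where $\partial_{\bar\ell}$ lands on $f^\alpha_i$ vanishes by holomorphicity of $f$. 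Applying $\partial_k$ to each summand and evaluating at $p$, I use $dg(p)=0$ and $dh(f(p))=0$ in the chosen frames, together with the observation that $\partial_k \overline{f^\beta_{j\ell}}=0$ since it is the $\partial/\partial z_k$ of the conjugate of a holomorphic function, to see that in each expansion only one term survives at $p$.

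The three surviving contributions, after contracting with $g^{k\bar\ell}$, are identified using the Chern curvature formula recalled at the beginning of Section 2. At $p$, where $dg=0$ in the frame, that formula reduces to $R^g_{k\bar\ell p\bar q} = -\partial_k\partial_{\bar\ell} g_{p\bar q}$, and the elementary identity $\partial_k\partial_{\bar\ell}g^{i\bar j}(p) = -g^{i\bar q}g^{p\bar j}\partial_k\partial_{\bar\ell}g_{p\bar q}(p)$ converts the first surviving term into $(g^{k\bar\ell}R^g_{k\bar\ell p\bar q})g^{i\bar q}g^{p\bar j}h_{\alpha\bar\beta}f^\alpha_i\overline{f^\beta_j}$, which after relabeling of dummy indices is exactly the stated first Chern--Ricci contribution. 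The same formula applied to $h$ at $f(p)$ gives $\partial_{w_\gamma}\partial_{\bar w_\delta}h_{\alpha\bar\beta}(f(p)) = -R^h_{\gamma\bar\delta\alpha\bar\beta}$, turning the second surviving term into the stated $-R^h$ contribution (via $\gamma,\delta \leftrightarrow \alpha,\beta$ relabeling). Finally the third surviving term, contracted with $g^{k\bar\ell}$ at $p$ in the unitary frames, is $\sum_{i,k,\alpha}|f^\alpha_{ki}|^2$, which is precisely $|\nabla df|^2$ at $p$ in Chern-normal frames.

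The conceptual content of the calculation is identical to the K\"ahler Schwarz lemma; the only real obstacle is the index and frame bookkeeping forced by the non-K\"ahler setting, where one cannot simultaneously normalize $g$ and $h$ through holomorphic coordinates. Handling this via Chern-normal unitary frames reduces the problem to the familiar K\"ahler calculation at a single point, and tensoriality then delivers the global identity stated in the lemma.
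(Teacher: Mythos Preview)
Your approach differs from the paper's. The paper treats $s=\partial f$ as a holomorphic section of the Hermitian holomorphic bundle $E=T^{*}_M\otimes f^{*}T_N$ and applies the Bochner identity
\[
\partial\bar\partial|s|^2=\langle\nabla' s,\nabla' s\rangle-\langle\Theta^E s,s\rangle,
\]
then decomposes $\Theta^E=\Theta^{T^{*}_M}\otimes\mathrm{Id}+\mathrm{Id}\otimes f^{*}\Theta^{T_N}$ and traces. That gives (\ref{chen-lu}) in three lines, with no frames or pointwise normalizations at all. Your direct expansion is more hands-on; the paper's route is shorter and more conceptual, but both are standard ways to obtain the Chern--Lu formula.

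There is one slip in your setup that, read literally, breaks the argument. A \emph{unitary} frame has $g_{i\bar j}\equiv\delta_{ij}$ identically, so the coordinate formula from Section~2 would give $R^g_{k\bar\ell p\bar q}=-\partial_k\partial_{\bar\ell}g_{p\bar q}=0$, which is absurd. Conversely, holomorphic \emph{coordinates} with $dg(p)=0$ do not exist unless the Chern torsion vanishes at $p$---exactly the non-K\"ahler obstruction you say you are avoiding. What you actually need, and what always exists, is a local \emph{holomorphic frame} $\{e_i\}$ of the bundle $T^{1,0}M$ (not a coordinate frame, not a unitary frame) with $g(e_i,\bar e_j)(p)=\delta_{ij}$ and $d\bigl(g(e_i,\bar e_j)\bigr)(p)=0$, and similarly a holomorphic normal frame for $T^{1,0}N$ near $f(p)$. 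In such frames the components $f^\alpha_i$ of $df$ remain holomorphic, one has $\Box_g u=g^{i\bar j}e_i\bar e_j(u)$, and the curvature at $p$ is indeed $-e_k\bar e_\ell\,g_{i\bar j}(p)$; your three-term expansion and identifications then go through verbatim. So the computation is correct once ``Chern-normal unitary frame'' is replaced by ``holomorphic normal frame.''
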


\begin{proof}
Let
$\displaystyle s = \p f=f^{\alpha}_i dz_i\otimes  e_\alpha  \in
\Gamma(M,E)$, where $e_{\alpha} = f^{\ast }\frac{\partial }{\partial w_{\alpha}}$. Since $f$ is a holomorphic map, $s$ is a holomorphic
section of $E$, i.e. $\bp s=0$. Thus by Bochner's formula, we have
 $$ \p \bp |s|^2 = \langle \nabla's, \nabla's \rangle - \langle\Theta^E s,s\rangle $$
where $\Theta^E$ is the curvature of the  vector bundle $E$ with
respect to the induced metric. More precisely, we have
$$ \Theta^E= \Theta^{T^*_M}\otimes  \text{Id}_{f^*(T_N)}+\text{Id}_{T^{\ast }_M}\otimes f^*(\Theta^{T_N}).$$
By taking trace, we obtain
$$ \mbox{tr}_{\omega_g} (\sq\p\bp|s|^2)=|\nabla's|^2-\langle \mbox{tr}_{\omega_g}
\sqrt{-1} \Theta^E s,s\rangle $$
 which is exactly the formula
(\ref{chen-lu}) since $|s|^2= \mbox{tr}_{\omega_g} f^*\omega_h$.
\end{proof}
Recall that on a Hermitian manifold $(M,g)$, the curvature term
$g^{i\bar j} R^g_{i\bar j k\bar \ell}$ in (\ref{chen-lu}) is called
the \emph{second (Chern) Ricci curvature} and  is denoted by
$\Ric^{(2)}$. It is different from the classic (first Chern) Ricci
curvature tensor
$$   \mbox{Ric}^{(1)}_{k\overline{\ell}} = \sum_{i,j} g^{i\bar j} R^g_{k\bar \ell i\bar j }=-\frac{\p^2\log\det g}{\p
z^k\p\bar z^\ell}\ ,$$
although they coincide when $g$ is K\"ahler.  As an application of Lemma 4.1, we get

\begin{lemma}
Let $f:(M,g)\rightarrow (N ,h)$ be a holomorphic map
between two Hermitian manifolds.  Then outside the set of critical points
of $f$, one has
\begin{equation}
 \Box_g \log u \geq \frac{1}{u} \left[ R^{(2)}_{ k\bar \ell} g^{k\bar q}
g^{p\bar \ell}  h_{\alpha\bar\beta}    f^{\alpha}_p \overline{ f^{\beta}_q} -R^h_{\alpha\bar\beta\gamma\bar\delta}\left(g^{i\bar j}
f^{\alpha}_i \overline{f^{\beta}_j} \right)\left(g^{p\bar q} f^{\gamma}_p
\overline{f^{\delta}_q}\right)  \right] \label{chen-lu-kato}
\end{equation}
where $u=\mbox{tr}_{\omega_g} (f^*\omega_h ) $,  $\Box_g $ is the complex Laplacian, and $R^{(2)}_{k\bar\ell}= \mbox{Ric}^{(2)}_{k\bar\ell} $
 is the second Ricci curvature of the Hermitian manifold $(M,g)$.
\end{lemma}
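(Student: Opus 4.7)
The plan is to deduce \eqref{chen-lu-kato} from Lemma \ref{Lemma4.1} by applying the elementary identity
\begin{equation*}
\Box_g \log u \;=\; \frac{\Box_g u}{u} \;-\; \frac{|\partial u|^2_g}{u^2},
\end{equation*}
valid wherever $u>0$, i.e.\ off the critical set of $f$. Substituting the expression for $\Box_g u$ given by Lemma \ref{Lemma4.1}, and noting $g^{i\bar j}R^g_{i\bar j k\bar\ell}=R^{(2)}_{k\bar\ell}$ by definition, the estimate reduces to the Kato-type inequality
\begin{equation*}
|\partial u|^2_g \;\leq\; u\cdot |\nabla df|^2.
\end{equation*}

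For this Kato step I would use the description from the proof of Lemma \ref{Lemma4.1}: $u=|s|^2$ where $s=\partial f$ is a holomorphic section of $E=T^*_M\otimes f^*T_N$. Since $\bar\partial s=0$, we have $\partial u=\langle \nabla' s,s\rangle$ as a $(1,0)$-form on $M$, the pairing being with respect to the induced Hermitian metric on $E$. Fix any point $p$ off the critical locus and choose holomorphic normal coordinates for $g$ at $p$ together with a unitary frame for $f^*T_N$, so that $g_{i\bar j}(p)=\delta_{ij}$, $h_{\alpha\bar\beta}(p)=\delta_{\alpha\beta}$, and all first derivatives of the two metrics vanish at $p$. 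Writing $f^{\alpha}_{i;k}$ for the components of $\nabla' s$ at $p$, one then has $\partial_k u(p)=\sum_{i,\alpha} f^{\alpha}_{i;k}\overline{f^{\alpha}_i}$, and for each fixed $k$ the ordinary Cauchy--Schwarz inequality gives
\begin{equation*}
|\partial_k u|^2 \;\leq\; \Bigl(\sum_{i,\alpha}|f^{\alpha}_i|^2\Bigr)\Bigl(\sum_{j,\beta}|f^{\beta}_{j;k}|^2\Bigr).
\end{equation*}
Summing on $k$ produces $|\partial u|^2_g(p)\leq u(p)\cdot|\nabla df|^2(p)$; since the statement is tensorial and $p$ was an arbitrary non-critical point, this holds throughout the open set in question.

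Combining the two inputs, the term $|\nabla df|^2/u$ appearing in $\Box_g u/u$ is exactly cancelled by $|\partial u|^2_g/u^2$, leaving precisely the curvature expression on the right-hand side of \eqref{chen-lu-kato}. I do not foresee any genuine obstacle here: the only mildly delicate point is that the Cauchy--Schwarz estimate is intrinsically pointwise and must be carried out with respect to the induced Hermitian metric on $E$, which is why passing to normal coordinates at $p$ is the natural device; everything else is index bookkeeping already laid out in \eqref{chen-lu}.
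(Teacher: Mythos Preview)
Your proposal is correct and follows essentially the same route as the paper: start from $\Box_g \log u = \Box_g u/u - |\partial u|_g^2/u^2$, substitute Lemma~\ref{Lemma4.1}, and control $|\partial u|_g^2/u$ by $|\nabla df|^2$; the paper invokes the abstract Kato inequality $|\nabla|\xi||\le|\nabla\xi|$ for sections of a Hermitian bundle, whereas you unwind that inequality as a direct Cauchy--Schwarz computation. One small caution: for a general Hermitian (non-K\"ahler) $g$ you cannot make all first derivatives of $g$ vanish in holomorphic coordinates, but your argument only uses $g_{i\bar j}(p)=\delta_{ij}$ and $h_{\alpha\bar\beta}(p)=\delta_{\alpha\beta}$ at the single point, so this does not affect the validity.
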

\begin{proof}
By using $|df|^2=|\p f|^2=\mbox{tr}_{\omega_g} f^*\omega_h$ and the
formula (\ref{chen-lu}), we know that if $df\neq 0$,
\begin{eqnarray*}
 \Box_g \log |df|^2 &=& \frac{\Box_g |df|^2}{|df|^2} - \frac{|\p
|df|^2|^2}{|df|^4}\\
&=&\frac{\Box_g
|df|^2}{|df|^2}- \frac{4|\p|df||^2}{|df|^2}\\
&=&\frac{\Box_g
|df|^2}{|df|^2}- \frac{|\nabla|df||^2}{|df|^2}.
\end{eqnarray*}
By formula (\ref{chen-lu}), we have
\begin{eqnarray*}
 \Box_g\log |df|^2 &=& \frac{ R^{(2)}_{k\bar \ell} g^{k\bar q} g^{p\bar \ell} h_{\alpha\bar\beta} f^{\alpha}_p \overline{ f^{\beta}_q}
  -R^h_{\alpha\bar\beta\gamma\bar\delta}\left(g^{i\bar j} f^{\alpha}_i \overline{f^{\beta}_j} \right)\left(g^{p\bar q} f^{\gamma}_p
\overline{f^{\delta}_q} \right)}{|df|^2}\\
&&+\frac{|\nabla df|^2-\left|\nabla
|df|\right|^2}{|df|^2}.
\end{eqnarray*}
The well-known Kato's inequality (e.g.
\cite{CGH}) says that for any section $\xi$ of an abstract
Riemannian vector bundle $(E,\nabla)$, one has
$$ |\nabla
|\xi||\leq|\nabla\xi|, $$
outside the zero set of $\xi$. Hence, we get (\ref{chen-lu-kato}).
\end{proof}
By using formula (\ref{chen-lu-kato}), we obtain the
following refined version of Yau's Schwarz calculation on Hermitian
manifolds, which is also analogous to Royden's formulation
(\cite{Ro}) (see also \cite{WWY},\cite{WY}).

\begin{theorem}\label{schwarz2} Let $f: (M,g)\rightarrow (N,h)$ be non-constant holomorphic map
between two Hermitian manifolds.  Suppose that the second Ricci
 curvature of $g$ satisfies
  \begin{equation}
  \Ric^{(2)}(g)\geq  - \lambda \omega_g +\mu f^*\omega_h\label{2riccibound}
  \end{equation}
  for continuous functions $\lambda$, $\mu $ where $\mu\geq 0$, and the real bisectional curvature of $h$ is bounded from above by
a continuous function $-\kappa \leq 0$ on $N$. Then we have
\begin{equation}   \Box_g u \geq - \lambda
u + \left(\frac{f^{\ast }\kappa }{r}+\frac{\mu}{n}\right)u^2\label{max1}
\end{equation}
and outside the zero locus of $df$:
 \begin{equation} \Box_g \log u \geq - \lambda+\left(\frac{f^{\ast }\kappa }{r}+\frac{\mu}{n}\right) u \label{max}
 \end{equation}
where $u=|df|^2= \mbox{tr}_{\omega_g} (f^{\ast }\omega_h)$ and  $r$ is the maximal rank of $df$.
\end{theorem}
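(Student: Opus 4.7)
The plan is to take the pointwise identity (\ref{chen-lu-kato}) as the starting point and estimate its two curvature terms separately using the two hypotheses. The technical device that makes everything collapse into the definition of real bisectional curvature is a singular-value decomposition: fix a point $p \in M$, and choose a $g$-unitary frame $\{e_i\}$ of $T_pM$ together with an $h$-unitary frame $\{\tilde e_\alpha\}$ of $T_{f(p)}N$ so that the Jacobian matrix takes the diagonal form $f^\alpha_i = \sqrt{\mu_i}\,\delta_{\alpha i}$ with $\mu_i \geq 0$. Only $\rho := \mathrm{rank}_p(df) \leq r$ of the $\mu_i$'s are positive, and in this frame $h_{\alpha\bar\beta}f^\alpha_i\overline{f^\beta_j} = \mu_i\delta_{ij}$, so $u = \sum_i \mu_i$.

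First I would handle the source-curvature term. In the chosen frame the expression $R^{(2)}_{k\bar\ell}\,g^{k\bar q}g^{p\bar\ell}h_{\alpha\bar\beta}f^\alpha_p\overline{f^\beta_q}$ simplifies to $\sum_k R^{(2)}_{k\bar k}\mu_k$. The assumption (\ref{2riccibound}), read as a Hermitian-matrix inequality, gives $R^{(2)}_{k\bar k} \geq -\lambda + \mu\mu_k$ in this frame, hence
\begin{equation*}
\sum_k R^{(2)}_{k\bar k}\mu_k \;\geq\; -\lambda u + \mu\sum_k \mu_k^2 \;\geq\; -\lambda u + \frac{\mu}{n}u^2
\end{equation*}
by Cauchy--Schwarz (noting $\mu \geq 0$). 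This is the bound contributing the $\mu/n$ piece.

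The second curvature term is where the real bisectional curvature enters. In the SVD frame, $\sum_{i,j}g^{i\bar j}f^\alpha_i\overline{f^\beta_j} = \mu_\alpha\delta_{\alpha\beta}$, so
\begin{equation*}
R^h_{\alpha\bar\beta\gamma\bar\delta}\bigl(g^{i\bar j}f^\alpha_i\overline{f^\beta_j}\bigr)\bigl(g^{p\bar q}f^\gamma_p\overline{f^\delta_q}\bigr) \;=\; \sum_{\alpha,\gamma} R^h_{\alpha\bar\alpha\gamma\bar\gamma}\,\mu_\alpha\mu_\gamma.
\end{equation*}
This is exactly the shape that the real bisectional curvature controls: with nonnegative weights $a_\alpha = \mu_\alpha$ (not all zero) and the $h$-unitary frame $\{\tilde e_\alpha\}$, the hypothesis $B_h \leq -\kappa$ at $f(p)$ yields $\sum_{\alpha,\gamma} R^h_{\alpha\bar\alpha\gamma\bar\gamma}\mu_\alpha\mu_\gamma \leq -(f^*\kappa)\sum_\alpha \mu_\alpha^2$. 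Another Cauchy--Schwarz, applied to the at most $\rho \leq r$ nonzero $\mu_\alpha$'s, gives $\sum_\alpha \mu_\alpha^2 \geq u^2/\rho \geq u^2/r$. Multiplying by $-1$, the second term in (\ref{chen-lu-kato}) contributes at least $(f^*\kappa/r)u^2$.

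Adding these two estimates and inserting them into (\ref{chen-lu-kato}) yields (\ref{max}) outside the zero locus of $df$. The inequality (\ref{max1}) then follows either by multiplying through by $u > 0$ at regular points and appealing to (\ref{chen-lu}) directly, or by observing that at critical points ($df = 0$) both sides vanish except for the harmless non-negative $|\nabla df|^2$ contribution from (\ref{chen-lu}). The only genuinely subtle step is the second one: away from the K\"ahler-like setting one does not have the symmetries needed to reduce a general bisectional-type contraction to holomorphic sectional curvature, and it is precisely the definition of $B_h$ as a quadratic form in nonnegative weights $a_\alpha$ that absorbs this obstruction. Everything else is frame-choice bookkeeping and two applications of Cauchy--Schwarz.
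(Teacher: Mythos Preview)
Your argument is correct and follows essentially the same route as the paper: singular-value decomposition at a point, then the Ricci lower bound plus Cauchy--Schwarz for the domain term, and the real bisectional curvature upper bound plus Cauchy--Schwarz (over the $\leq r$ nonzero singular values) for the target term, all fed into (\ref{chen-lu}) and (\ref{chen-lu-kato}). The only cosmetic differences are that the paper writes the singular values as $\lambda_i$ (so your $\mu_i$ is their $\lambda_i^2$, which creates an unfortunate clash with the function $\mu$ in your write-up) and handles the domain-curvature estimate in coordinate-free form via the matrix Cauchy--Schwarz inequality $\sum_{p,q}|\Phi_{p\bar q}|^2 \geq \frac{1}{n}(\sum_p\Phi_{p\bar p})^2$ before specializing to the SVD frame.
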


\begin{proof}
By formula (\ref{2riccibound}), we have
\begin{eqnarray}
R^{(2)}_{ k\bar \ell}  g^{k\bar q}  g^{p\bar \ell}
h_{\alpha\bar\beta}    f^{\alpha}_p \overline{f^{\beta}_q}
& \geq &  ( - \lambda g_{k\bar\ell} + \mu
h_{\gamma\bar\delta}     f^{\gamma}_k \overline{ f^{\delta}_{\ell} } )  \ g^{k\bar q}
g^{p\bar \ell}  h_{\alpha\bar\beta}   f^{\alpha}_p \overline{ f^{\beta}_q } \nonumber \\
& \geq & - \lambda \left( \mbox{tr}_{\omega_g}
f^*\omega_h \right) + \frac{\mu}{n}   \left(\mbox{tr}_{\omega_g}
f^*\omega_h\right)^2,  \label{11}
\end{eqnarray}
where in the last step
we used the fact $\mu\geq 0$ and the Cauchy-Schwarz inequality:
\begin{equation}
(h_{\gamma\bar\delta}  f^{\gamma}_k  \overline{ f^{\delta}_{\ell} } )\  g^{k\bar q}
g^{p\bar \ell} (h_{\alpha\bar\beta}   f^{\alpha}_p \overline{ f^{\beta}_q } ) \geq \frac{1}{n}\left(\mbox{tr}_{\omega_g} f^*\omega_h\right)^2.
\label{55}
\end{equation}
Indeed, if we set $g_{i\bar j}=\delta_{ij}$ and
denote by $\Phi_{p\bar q}=\sum_{\alpha,\beta}h_{\alpha\bar\beta} f^{\alpha}_p
 \overline{ f^{\beta}_q}$ , then (\ref{55}) is
equivalent to
\begin{equation}
\sum_{p,q} |\Phi_{p\bar q}|^2\geq \frac{1}{n}
 \left(\sum_p \Phi_{p\bar p} \right)^2,
\end{equation}
which is obviously true. Next, we
use ideas in \cite{Ro} to estimate the second term on the right hand
side of (\ref{chen-lu-kato}).

 At a fixed point $p\in M$,  by taking unitary changes of coordinates at $p$ and
$f(p)$, we may  assume that the matrix $\left[ f^{\alpha}_i \right]$ has the canonical form
\begin{equation}
f^{\alpha}_i=\lambda_i\delta_i^\alpha
\end{equation}
with $\lambda_1\geq \lambda_2\geq
\cdots\geq\lambda_r>\lambda_{r+1}=\cdots=0$, where $r$ is the
rank of the matrix $\left[f^{\alpha}_i \right]$. Then
$$\mbox{tr}_{\omega_g} f^*\omega_h=\sum_i \lambda_i^2=\sum_\alpha
\lambda^2_\alpha.$$
Hence, we have
\begin{eqnarray}
\nonumber && R^h_{\alpha\bar\beta\gamma\bar\delta}\left(g^{i\bar j}
     f^{\alpha}_i \overline{ f^{\beta}_j}  \right)\left(g^{p\bar q} f^{\gamma}_p \overline{ f^{\delta}_q}  \right) \ =
\sum_{\alpha,\beta,\gamma,\delta,i,k}R^h_{\alpha\bar\beta\gamma\bar\delta}\lambda_i^2\lambda_k^2
\cdot
\delta_i^\alpha\delta_i^\beta\delta_k^\gamma\delta_k^\delta\\
\label{key111111}&& = \sum_{\alpha,\gamma,i,k}R^h_{\alpha\bar\alpha\gamma\bar\gamma}\lambda_i^2\lambda_k^2
\cdot \delta_i^\alpha\delta_k^\gamma \ = \ \sum_{\alpha,\gamma}R^h_{\alpha\bar\alpha\gamma\bar\gamma}\lambda_\alpha^2\lambda_\gamma^2.
\end{eqnarray}
Since the real bisectional curvature of $(N, h)$ is bounded from
above by $-\kappa\leq 0$, we get
\begin{equation}
\sum_{\alpha,\gamma}R^h_{\alpha\bar\alpha\gamma\bar\gamma}\lambda_\alpha^2\lambda_\gamma^2\leq
-\kappa \left(\sum_\alpha \lambda_\alpha^4\right)\leq
-\frac{\kappa }{r}\left(\sum_\alpha \lambda_\alpha^2\right)^2=
-\frac{\kappa }{r}(\mbox{tr}_{\omega_g} f^*\omega_h)^2.\label{22}
\end{equation}
The last
inequality follows from the fact that $\kappa \geq 0$ and $r$ is the
maximal number of nonzero elements of $\lambda_\alpha$. Therefore,
by using formulas (\ref{chen-lu}), (\ref{11}) and (\ref{22}), we
obtain (\ref{max1}). By using (\ref{chen-lu-kato}), (\ref{11}) and
(\ref{22}), we obtain (\ref{max}).
\end{proof}

If we apply the above theorem to the identity map of $M$, we get the following:

\begin{corollary} \label{schwarz} Let $M$ be a compact complex manifold
with two Hermitian metrics $g$, $h$, such that $h$
has real bisectional curvature bounded above by a continuous function
$-\kappa \leq 0$ on $M$, and $g$ satisfies
\begin{equation}\label{assum}
\Ric^{(2)}(g)\geq -\lambda \omega_g+\mu\omega_h,
\end{equation}
for some continuous functions $\lambda$, $\mu$ with $\mu \geq 0$. Then we have
\begin{equation}
\Box_{g}\log u \geq \left(\frac{\kappa}{n}+\frac{\mu}{n}\right) u -\lambda , \label{key2}
\end{equation}
where $u=\mbox{tr}_{\omega_g} \omega_h$. In particular, if $\kappa$, $\lambda$, $\mu$ are all constants and $\kappa +\mu >0$, then
\begin{equation}\sup_M u \leq \frac{n\lambda}{\kappa+\mu}.
\end{equation}
\end{corollary}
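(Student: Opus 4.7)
The plan is to specialize Theorem \ref{schwarz2} to the identity map and then apply a standard maximum principle argument. First I would set $f = \id_M : (M,g) \to (M,h)$, which is holomorphic and non-constant. With this choice $f^{\ast}\omega_h = \omega_h$, so the curvature hypothesis (\ref{2riccibound}) of Theorem \ref{schwarz2} reduces exactly to the assumption (\ref{assum}) of the corollary, with the real bisectional bound $-\kappa$ on $h$ imported directly. Moreover $df$ is everywhere of full rank $n$, so its critical set is empty and the maximal rank $r$ equals $n$; in particular $u = \tr_{\omega_g}\omega_h$ is strictly positive on $M$ and $\log u$ is smooth throughout.

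With these choices in hand, I would feed the data into inequality (\ref{max}) of Theorem \ref{schwarz2}. Because $r = n$ globally, the coefficient $\kappa / r + \mu / n$ simplifies to $(\kappa + \mu)/n$, and inequality (\ref{max}) becomes exactly (\ref{key2}), valid on all of $M$ (rather than merely off the critical set of $df$).

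For the supremum bound, under the additional assumption that $\kappa, \lambda, \mu$ are constants with $\kappa + \mu > 0$, I would invoke the maximum principle on the compact manifold $M$. Since $\log u$ is smooth and $M$ is compact, $\log u$ attains its maximum at some point $p_0 \in M$, where $\Box_g \log u (p_0) \leq 0$. Substituting into (\ref{key2}) at $p_0$ gives $(\kappa + \mu) u(p_0)/n \leq \lambda$, and since $u$ and $\log u$ are maximized at the same point, this yields $\sup_M u \leq n\lambda/(\kappa + \mu)$.

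There is essentially no obstacle once Theorem \ref{schwarz2} is granted; the argument is a short bookkeeping exercise. The only thing that genuinely needs to be checked is that the identity map satisfies the hypotheses of Theorem \ref{schwarz2}, in particular that it is non-constant with full-rank differential so that one may take $r = n$ globally. This observation is what promotes the pointwise estimate (\ref{max}) from one that holds only away from the zero locus of $df$ to an inequality valid everywhere on $M$, which is precisely what is needed in order to apply the maximum principle.
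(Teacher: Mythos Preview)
Your proposal is correct and follows exactly the paper's approach: the corollary is obtained by applying Theorem~\ref{schwarz2} to the identity map, which has full rank $r=n$, and then using the maximum principle on the compact manifold $M$. The only difference is that you have spelled out the maximum-principle step explicitly, whereas the paper leaves it implicit.
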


As an immediate consequence of the above Schwarz calculation and
Yau's generalized maximum principle \cite{Yau75}, we get a proof of
Theorem \ref{Theorem1.10}.
 Since  the statement involves general Hermitian manifolds, let us give a detailed proof for the convenience of the readers. First, recall that
  Yau's maximum principle says that, on a complete Riemannian manifold $M$ with Ricci curvature bounded from below, if $v$ is a smooth function
   bounded from below, then for any $\eps >0$, there exists a point $p_{\eps}$ in $M$, such that at $p_{\eps}$,
\begin{equation}
|\nabla v | < \eps , \ \ \ \ \Delta v > -\eps , \ \ \ \ v(p_{\eps })
< \inf v + \eps .\label{27}
\end{equation}
Here $\Delta$ is the Laplacian and $\nabla$ the Levi-Civita connection on $M$.

Next, let us recall the relation between the Ricci curvatures of the Riemannian and Chern connections of a Hermitian manifold. Let $(M^n,g)$
 be a Hermitian manifold. It is well-known that
\begin{equation}
\Delta v = 2 \ \Box_g v + 2 \sum_{i=1}^n (v_i \overline{\eta_i} +
v_{\overline{i}} \eta_i )\label{28}
\end{equation}
where $\eta = \sum_i \eta_i \varphi_i$ is the Gauduchon $1$-form, $v_i = e_i(v)$, $v_{\overline{i}} = \overline{e}_i(v)$, and $e$
 is any unitary frame with $\varphi$ the dual coframe.

Now let us assume that $|\eta| \leq C$ on $M$ for some constant $C>0$, and let $u\geq 0$ be a smooth function on $M$ satisfying
 the inequality $$\Box_g u \geq -bu+au^2$$ for some constants $a$, $b$, with $a>0$.  Then we claim that Yau's maximum principle
  will imply that $u\leq \frac{b}{a}$.

To see this, note that by (\ref{28}) we have
\begin{equation}
\Delta u \geq -2b u + 2a u^2 - 4C |\nabla u|.\label{29}
\end{equation}
Let us consider the smooth, positive function $v=(u+1)^{-\frac{1}{2}}$ on $M$. We have $v'=-v^3/2$, $v'' = 3v^5/4$, and $\nabla v = v' \nabla u$, so
$$ \Delta v = -\frac{v^3}{2}\Delta u + \frac{3v^5}{4} |\nabla u|^2 =  -\frac{v^3}{2}\Delta u + \frac{3}{v}|\nabla v|^2 ,$$
and
$$ -\frac{2}{v^3}\Delta v + \frac{6}{v^4} |\nabla v|^2 = \Delta u > -bu + au^2 - 4C |\nabla u| $$
by (\ref{29}). That is, we always have
\begin{equation}
-bu + au^2 < -\frac{2}{v^3}\Delta v + \frac{6}{v^4} |\nabla v|^2 +
\frac{8C}{v^3} |\nabla v|.\label{30}
\end{equation}
Now if $(M^n,g)$ as a Riemannian manifold is complete and with Ricci curvature bounded from below, then by Yau's maximum principle,
 for any $\eps >0$, there will be $p_{\eps } \in M$ at which (\ref{27}) holds, so by (\ref{30}), at $p_{\eps }$ we have
\begin{equation}
 -bu + au^2 < (2+8C)\eps (u+1)^{\frac{3}{2}} + 6\eps^2 (u+1)^2.
\end{equation}
Since $a>0$ is a constant, by choosing $\eps$ sufficiently small, we know that $\sup u$ must be finite. When $\eps \rightarrow 0$,
 $u(p_{\eps }) \rightarrow \sup u$, so the above inequality gives $\sup u \leq \frac{b}{a}$. Applying Theorem \ref{schwarz2} in the
  case where $\mu =0$, both $\lambda =b$ and $\kappa =k >0$ are constants,  we get proved the following

\begin{theorem}\label{Theorem4.5}
Let $(M^n,g)$ be a Hermitian manifold with bounded Gauduchon $1$-form $\eta $, and its second (Chern) Ricci curvature
 is bounded from below by a constant $-b$, and as a Riemannian manifold it is complete and has Ricci curvature bounded
  from below. Let $(N^m,h)$ be a Hermitian manifold whose real bisectional curvature is bounded from above by a negative
   constant $-k<0$. If $f: M\rightarrow N$ is any non-constant holomorphic map, then we must have $b>0$, and
\begin{equation}
\sup_M \mbox{tr}_{\omega_g} f^{\ast } \omega_h \leq \frac{rb}{k}
\end{equation}
where $r$ is the maximum rank of $df$.
\end{theorem}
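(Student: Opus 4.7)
My plan is to specialize Theorem \ref{schwarz2} to the constants present in the hypothesis and then feed the resulting differential inequality into Yau's generalized maximum principle. Write $u = \mbox{tr}_{\omega_g} f^{\ast}\omega_h \geq 0$; since $f$ is non-constant, $u \not\equiv 0$. The hypothesis $\mbox{Ric}^{(2)}(g) \geq -b\,\omega_g$ is condition (\ref{2riccibound}) with $\lambda = b$ and $\mu = 0$, while the real bisectional curvature bound on $h$ corresponds to $\kappa = k$. Theorem \ref{schwarz2} then gives, on all of $M$,
\[
\Box_g u \;\geq\; -b\,u + \frac{k}{r}\,u^2.
\]

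The next step is to convert from the Chern Laplacian $\Box_g$ to the Riemannian Laplacian $\Delta$, since Yau's theorem is formulated for the latter. Applying identity (\ref{28}) together with the pointwise bound $|\eta| \leq C$ coming from the boundedness of the Gauduchon $1$-form, I would obtain
\[
\Delta u \;\geq\; -2b\,u + \frac{2k}{r}\,u^2 - 4C\,|\nabla u|,
\]
in which the first-order gradient term must be absorbed in the argument that follows.

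The main obstacle, and what makes the argument delicate, is that $M$ is not assumed compact, so $u$ need not be bounded a priori and the ordinary maximum principle is unavailable. To get around this I would introduce the bounded auxiliary function $v = (u+1)^{-1/2} \in (0,1]$, exactly in the spirit of the computation written out just before the theorem statement, and transform the inequality for $u$ into an inequality whose leading terms involve $\Delta v$ and $|\nabla v|$. The hypotheses (completeness of $(M,g)$ as a Riemannian manifold and the lower bound on its Ricci curvature) are precisely what is needed to invoke Yau's generalized maximum principle, which produces, for every $\varepsilon > 0$, a point $p_\varepsilon \in M$ satisfying (\ref{27}). Evaluating the transformed inequality at $p_\varepsilon$ yields
\[
-b\,u(p_\varepsilon) + \frac{k}{r}\,u(p_\varepsilon)^2 \;<\; (2+8C)\varepsilon\,(u(p_\varepsilon)+1)^{3/2} + 6\varepsilon^2\,(u(p_\varepsilon)+1)^2.
\]
A standard comparison of growth rates (the left side is quadratic in $u$, while for $\varepsilon$ sufficiently small the right side is dominated by the quadratic term on the left whenever $u$ is large) forces $\sup u < \infty$. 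Taking $\varepsilon \to 0$ along $p_\varepsilon$, so that $u(p_\varepsilon) \to \sup u$, the inequality passes to $-b\,\sup u + \frac{k}{r}(\sup u)^2 \leq 0$. Non-constancy of $f$ gives $\sup u > 0$, and this forces both $b > 0$ and $\sup u \leq rb/k$, completing the proof.
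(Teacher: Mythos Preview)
Your proposal is correct and follows essentially the same argument as the paper: the paper's proof of Theorem~\ref{Theorem4.5} is precisely the computation laid out in the paragraphs immediately preceding its statement, namely applying Theorem~\ref{schwarz2} with $\mu=0$, $\lambda=b$, $\kappa=k$, converting $\Box_g$ to $\Delta$ via (\ref{28}), substituting $v=(u+1)^{-1/2}$, and invoking Yau's maximum principle to reach the inequality at $p_\varepsilon$. Your handling of the conclusion (finiteness of $\sup u$, then $\varepsilon\to 0$, then $b>0$ from $\sup u>0$) matches the paper's reasoning exactly.
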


In particular, if we start with $b=0$ at the beginning, then we know
that $f$ must be constant. This gives a proof to Theorem
\ref{Theorem1.10}.

\begin{remark} There are also some Schwarz type inequalities in
\cite{Tosatti07} under different curvature assumptions.
\end{remark}

\section{Nonpositive and quasi-negative real bisectional curvature}
In this section we will give proofs to Theorems \ref{Theorem1.7}
through \ref{Theorem1.9} and \ref{Theorem1.11} stated in the
introduction. The proofs
 are basically the same as those given in \cite{WY}, \cite{TY}, and \cite{DT}, with the simple fact that, all
  metrics are equivalent on a compact manifold.

\noindent {\bf Proof of Theorem \ref{Theorem1.7}:} Let $(M^n,h)$ be
a compact Hermitian manifold with nonpositive real bisectional
 curvature $B_h$. Let $g$ be a K\"ahler metric on $M$. Assuming that the canonical line bundle $K_M$ is not nef, and
  we want to derive a contradiction as in the proof of Theorem 1.1 in \cite{TY}.

Following \cite{TY}, first of all, since $K_M$ is not nef, there will be $\eps_0>0$ such that $\eps_0[\omega_g ] - c_1(M)$ is
 nef but not a K\"ahler class. Then for any $\eps >0$, the class $(\eps_0+\eps )[\omega_g] -c_1(M)$ is a K\"ahler class.
  Write $\omega$ for $\omega_g$, and denote by $\mbox{Ric}(g)=-\sqrt{-1} \partial \overline{\partial} \log \omega^n$ the $(1,1)$
   form of the first Chern Ricci of the $g$, which represents $c_1(M)$, thus there exists smooth function $\varphi_{\eps}$
    and $\psi_{\eps}$ on $M$ such that the K\"ahler metric
$$ \omega_{\eps} : = (\eps_0+\eps )\omega - \mbox{Ric}(g) + \sqrt{-1} \partial \overline{\partial} u_{\eps} , $$
where $u_{\eps} = \varphi_{\eps}+\psi_{\eps}$, satisfies $ \omega_{\eps}^n = e^{u_{\eps}}\omega^n$, or equivalently,
$$  \mbox{Ric} (\omega_{\eps}) = \mbox{Ric}(g) - \sqrt{-1} \partial \overline{\partial} u_{\eps}  = - \omega_{\eps} + (\eps_0+\eps )\omega .$$
Since $M$ is compact, there exists a constant $D>0$ such that $\frac{1}{D} \omega \leq \omega_h\leq D\omega$. So one has
$$  \mbox{Ric} (\omega_{\eps}) \geq - \omega_{\eps} + \frac{\eps_0+\eps}{D}\omega_h .$$
Now if we apply the Schwarz Lemma( Corollary \ref{schwarz}) to the
identity map from $(M^n, \omega_{\eps})$ onto $(M^n,h)$, with
$\lambda =1$,
 $\mu = \frac{\eps_0+\eps}{D}$, and $\kappa =0$, then we get
$$ \mbox{tr}_{\omega_{\eps}} \omega_h \leq \frac{nD}{\eps_0+\eps } $$
as in \cite{TY}. So $\mbox{tr}_{\omega_{\eps}} \omega \leq \frac{nD^2}{\eps_0+\eps }$. The rest of the argument is identical
 to that of \cite{TY}, so Theorem \ref{Theorem1.7} holds. \qed

Next, let us prove Theorem \ref{Theorem1.8}. Again the same proof of
\cite{DT} can be slightly modified to cover this case.

\noindent {\bf Proof of Theorem \ref{Theorem1.8}:} Suppose that
$(M^n,h)$ be a compact Hermitian manifold with quasi-negative real
 bisectional curvature $B_h$. Let $g$ be a K\"ahler metric on $M$ and write $\omega$ for $\omega_g$. The canonical line
  bundle $K_M$ is nef by Theorem \ref{Theorem1.7}. As observed in \cite{DT}, it suffices to show
\begin{equation}
c_1^n(K_M) >0 ,
\end{equation}
as then $K_M$ will be big by a result of Demailly and Paun
\cite{DP}. Thus $M$ will be Moishezon, hence projective as it is
assumed to be K\"ahler. Now the Kawamata Theorem implies that $K_M$
must be ample since it does not contain any rational curves.

Since $K_M$ is nef, by  \cite[Proposition ~8]{WY}, for any $\eps
>0$, there exists smooth function $u_{\eps}$ on $M$ such that the
K\"ahler metric
$$ \omega_{\eps} := \eps \omega - \mbox{Ric}(g) + \sqrt{-1}\partial \overline{\partial} u_{\eps} $$
satisfies $\omega_{\eps }^n = e^{u_{\eps }} \omega^n$, or equivalently,
\begin{equation}
\mbox{Ric}(\omega_{\eps} ) = \mbox{Ric}(g) - \sqrt{-1}\partial \overline{\partial} u_{\eps} = - \omega_{\eps} + \eps \omega,
\end{equation}
and $u_{\eps }\leq C$ for such constant $C$ independent of $\eps$.
The same proof of the inequality $c_1^n(K_M)>0$  in \cite{DT} will go through provided that, for each $\eps >0$, there will be smooth function $S_{\eps }>0$ on $M$ such that
\begin{equation}
\Delta_{\omega_{\eps}} \log S_{\eps} \geq \frac{n+1}{2n}\kappa S_{\eps} -1
\end{equation}
holds, where $\kappa$ is a continuous function on $M$ which is quasi-positive, namely, non-negative everywhere and positive somewhere.

Now if we consider the Hermitian metric $h$ on $M$ with quasi-negative real bisectional curvature. We may let $\kappa $ be a smooth
 quasi-positive function on $M$ such that $B_h\leq - \kappa$. Then by applying Corollary \ref{schwarz}, which is the Hermitian version
 of \cite[ Proposition ~9]{WY}, to the identity map from $(M, \omega_{\eps})$ onto $(M, h)$, we get the above inequality for the
   function $S_{\eps } = \mbox{tr}_{\omega_{\eps}} \omega_h $, since we have
$$  \mbox{Ric}(\omega_{\eps} ) = - \omega_{\eps} + \eps \omega \geq - \omega_{\eps} + \frac{\eps}{D} \omega_h ,$$
where $D>0$ is a constant such that $\frac{1}{D}\omega \leq \omega_h
\leq D \omega$ as before. This completes the proof of Theorem
\ref{Theorem1.8}. \qed

Next, let us prove Theorem \ref{Theorem1.9}, which is a rather
special case, but without the assumption that $M$ is K\"ahlerian a
priori.

\noindent {\bf Proof of Theorem \ref{Theorem1.9}:} Let $M$ and $N$
be as in Theorem \ref{Theorem1.9}. Assume the contrary that there is
a bimeromorphic map
 $f$ from $N$ into $M$. Since $M$ is compact and with nonpositive holomorphic sectional curvature, by the result of Shiffman and Griffiths,
  $M$ obeys the Hartog's phenomenon. So any meromorphic map into $M$ must be holomorphic. Let $U\subseteq M$ be the open set where real
   bisectional curvature is negative. Let $Y$ be a generic fiber of $N$ such that the restriction map $f|_Y: Y\rightarrow M$ is
    non-constant and its image intersects $U$. By assumption, $Y$ is a compact K\"ahler manifold with $c_1=0$, thus admits a Ricci-flat
     K\"ahler metric by Yau's solution to the Calabi conjecture. By applying the Schwarz Lemma Theorem \ref{Theorem4.5} to $f|_Y$, we get a contradiction.
      So $M$ cannot be bimeromorphic to $N$. \qed

Finally, we prove Theorem \ref{Theorem1.11}, which is a direct
consequence of Lemma \ref{Lemma4.1}.

\noindent {\bf Proof of Theorem \ref{Theorem1.11}:} Let $f: (M,g)
\rightarrow (N,h)$ be a non-constant holomorphic map between
Hermitian manifolds,
 with $M$ being compact. By the curvature assumptions on $M$ and $N$, and Lemma \ref{Lemma4.1}, we get
$$ \Box_g u \geq |\nabla df |^2  $$
where $u=|df|^2= \mbox{tr}_{\omega_g} f^{\ast }\omega_h$. By Gauduchon's theorem, there exists a smooth function $v$ on $M$
 such that $\partial \overline{\partial }(e^v \omega_g^{n-1})=0$. So we get
$$ \int_M |\nabla df |^2 e^v \omega_g^n \leq \int_M \Box_g u \ e^v \omega_g^n = \int_M n\sqrt{-1} \partial \overline{\partial} u \wedge  e^v \omega_g^{n-1} = 0 ,$$
thus $\nabla df =0$ everywhere on $M$. That is, $f$ is totally
geodesic. This completes the proof of Theorem \ref{Theorem1.11}.
\qed

\vskip 1\baselineskip


\begin{thebibliography}{99}

\bibitem{AGZ} A. Alvarez, G. Heier,  F.-Y. Zheng. \emph{On projectivized vector bundles and positive holomorphic sectional curvature,}
\href{http://arxiv.org/abs/1606.08347}{arXiv:1606.08347}.

\bibitem{CGH} J. Calderbank, P. Gauduchon, M. Herzlich, \emph{ On the Kato inequality in Riemannian geometry,}
 Global analysis and harmonic analysis, 95--113, S\'{e}min. Congr., 4, Soc. Math. France, Paris,
 2000.

\bibitem{Chau-Tam} A. Chau, L.-F. Tam, \emph{ On quadratic orthogonal bisectional curvature,}  J. Differential Geom. \textbf{92} (2012), no. 2, 187--200.

\bibitem{Chau-Tam1} A. Chau, L.-F. Tam, \emph{ K\"ahler C-spaces
and quadratic bisectional curvature,}   J. Differential Geom.
\textbf{94} (2013), no. 3, 409--468.

\bibitem{CCL79} Z.-H. Chen, S.-Y. Cheng,  Q.-K. Lu, \emph{
On the Schwarz lemma for complete K\"ahler manifolds,} Sci. Sinica
\textbf{22} (1979), no. 11, 1238--1247.

\bibitem {DP} J.-P. Demailly, M. Paun, \emph{Numerical characterization of the K\"ahler cone
of a compact K\"ahler manifold,} Ann. of Math. (2), \textbf{159}
(2004), no.3, 1247--1274.

\bibitem {DT} S. Diverio, S. Trapani,  \emph{Quasi-negative holomorphic sectional curvature and positivity of the canonical bundle,}
\href{http://arxiv.org/abs/1606.01381}{arXiv:1606.01381}.



\bibitem{FuWangWu}
J.-X. Fu, Z.-Z. Wang,  D.-M. Wu, \emph{Form-type equations on
K\"ahler manifolds of nonnegative orthogonal bisectional curvature},
arXiv:1010.2022

\bibitem {HW} G. Heier, B. Wong, \emph{On projective K\"ahler manifolds of partially positive curvature and rational connectedness,} \href{http://arxiv.org/abs/1509.02149}{
arXiv:1509.02149}.


\bibitem {HLW10} G. Heier, S. Lu,  B. Wong, \emph{On the canonical line bundle and negative holomorphic sectional curvature,}
Math. Res. Lett., {\bf 17} (2010), no.6, 1101--1110.

\bibitem {HLW14} G. Heier, S. Lu, B. Wong,  \emph{K\"ahler manifolds of semi-negative holomorphic sectional curvature,} to appear in J. Diff. Geom.,
\href{http://arxiv.org/abs/1403.4210}{arXiv:1403.4210}.





\bibitem{LiWuZheng} Q. Li, D.-M. Wu,  F.-Y Zheng, \emph{An example of compact K\"ahler
manifold with nonnegative quadratic bisectional curvature,} Proc.
Amer. Math. Soc. \textbf{141} (2013), no. 6, 2117--2126.

\bibitem{LiuYang} K.-F. Liu,  X.-K. Yang, \emph{Ricci curvatures on Hermitian manifolds,} To appear in Tran. AMS,
\href{http://arxiv.org/abs/1404.2481}{arXiv:1404.2481}.

\bibitem{Lu}  Y.-C. Lu,  \emph{Holomorphic mappings of complex manifolds,} J. Differential Geom. \textbf{2} (1968), 299--312.

\bibitem{Ro} H.-L. Royden, {\em The Ahlfors-Schwarz lemma in several complex variables,} Comm. Math. Helv. {\bf 55} (1980), no. 4, 547--558.

\bibitem{Tosatti07} V. Tosatti, \emph{ A general Schwarz Lemma for almost-Hermitian manifolds,}
Comm. Anal. Geom. \textbf{15} (2007), 1063--1086.

\bibitem{TY} V. Tosatti, X.-K. Yang, \emph{An extension of a theorem of Wu-Yau,} to appear in J.
Differential
Geom.,\href{http://arxiv.org/abs/1506.01145}{arXiv:1506.01145}.


\bibitem{WWY} P.-M. Wong, D. Wu, S.-T. Yau, {\em Picard number, holomorphic sectional curvature, and ampleness}, Proc. Amer. Math. Soc. {\bf 140} (2012), no. 2, 621--626.

\bibitem{WY} D.-M. Wu, S.-T. Yau, {\em Negative Holomorphic curvature and positive canonical bundle,} Invent. Math. \textbf{204} (2016), no. 2, 595--604.

\bibitem{WY1} D.-M. Wu, S.-T. Yau, {\em A remark on our paper ``Negative Holomorphic curvature and positive canonical bundle",} \href{http://arxiv.org/abs/1609.01377}{
arXiv:1609.01377}.


\bibitem{WYZ} D.-M. Wu, S.-T. Yau, F.-Y. Zheng, \emph{ A degenerate Monge-Amp\`ere equation and the boundary classes of K\"ahler cones,}
Math. Res. Lett. {\bf 16} (2009), no. 2, 365--374.


\bibitem{YZ} B. Yang, F.-Y. Zheng, \emph{On Curvature Tensors of Hermitian Manifolds,} to appear in Comm. Anal. Geom., \href{http://arxiv.org/abs/1602.01189}{arXiv:1602.01189}


\bibitem{Yang14} X.-K. Yang, \emph{Hermitian manifolds with semi-positive holomorphic sectional curvature,} { Math. Res. Lett.} \textbf{21} (2014), no. \textbf{4}, 831--862.


\bibitem{Yau75} S.-T. Yau, \emph{Harmonic functions on complete Riemannian manifolds,}
Comm. Pure Appl. Math. \textbf{28} (1975), 201--228.

\bibitem{YauS} S.-T. Yau, \emph{A general Schwarz lemma for K\"{a}hler manifolds,} Amer. J. Math.  {\bf 100} (1978), 197--204.

\bibitem{YauC} S.-T. Yau, \emph{On the Ricci curvature of a compact K\"ahler manifold and the complex Monge-Amp\`ere equation, I,}
Comm. Pure Appl. Math. {\bf 31} (1978), no.3, 339--411.



\bibitem{Zhang} X. -W.
Zhang, \emph{On the boundary of K\"ahler cone,} Proc. Amer. Math.
Soc., \textbf{140} (2012), 701--705.





\end{thebibliography}
\end{document}